\definecolor{labelkey}{gray}{.8}
\definecolor{refkey}{gray}{.8}
\definecolor{darkred}{rgb}{0.9,0.1,0.1}
\definecolor{darkgreen}{rgb}{0,0.5,0}
\newtheorem{theorem}{Theorem}[section]
\newtheorem{lemma}[theorem]{Lemma}
\newtheorem{proposition}[theorem]{Proposition}
\theoremstyle{remark}
\newtheorem{remark}[theorem]{Remark}
\renewenvironment{proof}[1][Proof]{ {\itshape \noindent {#1.}} }{$\Box$
\medskip}
\numberwithin{equation}{section}
\newcommand{\R}{\mathbb{R}}
\newcommand{\Z}{\mathbb{Z}}
\newcommand{\N}{\mathbb{N}}
\newcommand{\Pb}{\mathbb{P}}
\newcommand{\PP}{\mathbf{P}}
\newcommand{\E}{\mathbb{E}}
\newcommand{\B}{\mathcal{B}}
\newcommand{\eps}{\varepsilon}
\newcommand{\EE}{\mathbf{E}}
\newcommand{\ONE}{\mathbf{1}}
\begin{document}

\title[Localization length of directed polymer]{Localization length of the  $1+1$  continuum directed random polymer}
\author{Alexander Dunlap\and Yu Gu\and Liying Li}

\address[Alexander Dunlap]{Department of Mathematics, Courant Institute of Mathematical Sciences, New York University, New York, NY 10012 USA}
\address[Yu Gu]{Department of Mathematics, University of Maryland, College Park, MD 20742, USA}
\address[Liying Li]{Department of Mathematics, University of Michigan, Ann Arbor, MI 48109, USA}

\maketitle

\begin{abstract}
In this paper, we study the localization length of the $1+1$  continuum directed polymer, defined as the distance between the endpoints of two paths sampled independently from the quenched polymer measure. We show that the localization length converges in distribution in the thermodynamic limit, and derive an explicit density formula of the limiting distribution.  As a consequence, we prove the $\tfrac32$-power law decay of the density, confirming the physics prediction of Hwa-Fisher \cite{fisher}. Our proof uses the recent result of Das-Zhu \cite{daszhu}.
\bigskip



\noindent \textsc{Keywords:} Directed polymer, stochastic heat equation, Brownian bridge.

\end{abstract}
\maketitle

\section{Introduction}
\subsection{Main result}

Consider the stochastic heat equation with a $1+1$ spacetime white noise $\xi$ and the Dirac initial data
\begin{equation}\label{e.she}
\begin{aligned}
&\partial_t u(t,x)=\frac12\Delta u(t,x)+u(t,x)\xi(t,x), \quad t>0, \\
&u(0,x)=\delta(x).
\end{aligned}
\end{equation}
We define the random density 
\begin{equation}\label{e.defrho}
\rho(t,x)= \frac{u(t,x)}{\int_{\R} u(t,x')dx'}, \quad\quad t>0.
\end{equation}
On a formal level, one can write the solution by the Feynman-Kac formula as 
\[
u(t,x)=\E [ \delta(B_t-x)e^{\int_0^t \xi(s,B_s)ds}],
\]
where $B$ is a standard Brownian motion starting from the origin and $\E$ is the expectation on $B$. This can be viewed as the partition function of the point-to-point directed polymer of length $t$. Then $\rho$ in \eqref{e.defrho} is given by the formal expression
\[
\rho(t,x)= \frac{\E [ \delta(B_t-x)e^{\int_0^t \xi(s,B_s)ds}]}{\E [ e^{\int_0^t \xi(s,B_s)ds}]}
\]
and can be viewed as the quenched endpoint density of the polymer path of length $t$. 

The above Feynman-Kac formula is only formal as $\xi$ is a spacetime white noise thereby the expression $\int_0^t \xi(s,B_s)ds$ makes no mathematical sense. There is a rigorous formulation of the so-called $1+1$ continuum directed polymer, which was introduced in \cite{akq}.  As we are mostly interested in the endpoint distribution, rather than the distribution of the whole path, throughout the paper we will just stick to the definition given by \eqref{e.defrho}: given a realization of $\xi$, the endpoint of the polymer path of length $t$ has density $\rho(t,\cdot)$. We use $\hat{\E}_t$ and $\hat{\Pb}_t$ to represent the quenched expectation and probability, so for any bounded $f$ we have $\hat{\E}_t f(B_t)=\int_{\R} f(x)\rho(t,x)dx$,
and for any measurable $A\subseteq \R$, we have $\hat{\Pb}_t (B_t\in A)=\int_A \rho(t,x)dx$.

Let $B_t^1,B_t^2$ be two endpoints sampled independently from $\hat{\Pb}_t$. We define  
\begin{equation}\label{e.defell}
\ell(t)=B_t^1-B_t^2.
\end{equation}
If there were no random environment, then \eqref{e.she} would simply reduce to the standard heat equation, and $B_t^1,B_t^2$ would be  two independent Gaussian random variables with variance $t$, in which case $\ell(t)\sim \sqrt{t}$ for large $t$. With the presence of the random environment, the polymer paths preferentially visit the regions with large values of $\xi$. As a result, the paths localize and one may expect $\ell(t)$ to be of order $O(1)$ for large $t$. Thus, one can view $\ell(t)$ as describing the localization length, and we are interested in the distribution of $\ell(t)$ for large $t$. Throughout the paper, we will use $\EE$ and $\PP$ to denote the expectation and probability with respect to the noise $\xi$. Denote the quenched density of $\ell(t)$ by $g$, i.e., 
\[
g(t,x)=\int_{\R}\rho(t,y)\rho(t,y+x)dy,
\] 
and the annealed density of $\ell(t)$ by 
\[
\bar{g}(t,x)=\EE g(t,x),
\] so for any bounded $f$, $\EE \hat{\E}_t f(\ell(t))=\int_{\R} f(x)\bar{g}(t,x)dx$.

Here is the main result of the paper:
\begin{theorem}\label{t.mainth}
As $t\to\infty$, $\ell(t)$ converges in distribution in the annealed sense, and the limit has an explicit density $\bar{g}_\infty(x)$ given by \eqref{e.exginfinity} below. In other words, for any $f\in C_b(\R)$, we have 
\begin{equation}\label{e.mainconver}
\EE \hat{\E}_t f(\ell_t)=\int_\R f(x)\bar{g}(t,x)dx\to \int_{\R} f(x)\bar{g}_\infty(x)dx
\end{equation}
as $t\to\infty$. In addition, 
\begin{equation}\label{e.tail}
|x|^{\frac{3}{2}}\bar{g}_\infty(x)\to \lambda, \quad\quad \mbox{ as } |x|\to\infty,
\end{equation}
with $\lambda>0$ given by \eqref{e.asyginfinity} below.
\end{theorem}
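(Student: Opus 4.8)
\emph{Strategy.}
Both parts are deduced from the scaling limit of the quenched endpoint measure obtained by Das--Zhu \cite{daszhu}. The key remark is that $\ell(t)$ depends on $\hat{\Pb}_t$ only through differences of points, hence is unchanged if $\rho(t,\cdot)$ is replaced by its recentering $\tilde\rho_t(\cdot):=\rho(t,m_t+\cdot)$ about the (a.s.\ unique) maximizer $m_t$ of $x\mapsto u(t,x)$; conditionally on $\xi$, $\ell(t)$ then has the law of $\tilde B^1-\tilde B^2$ with $\tilde B^1,\tilde B^2$ sampled independently from $\tilde\rho_t$. From \cite{daszhu} we use: (i) $\tilde\rho_t\Rightarrow\rho_\infty$ in distribution as random elements of $\cP(\R)$ with the weak topology, for an explicit random density $\rho_\infty$; and (ii) a uniform no-escape-of-mass bound, $\lim_{K\to\infty}\sup_{t\ge1}\PP(\tilde\rho_t([-K,K]^c)>\eps)=0$ for every $\eps>0$. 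Granting (i)--(ii), the density in \eqref{e.exginfinity} is $\bar g_\infty(x)=\EE\int_\R\rho_\infty(y)\rho_\infty(y+x)\,dy$, which is a probability density since $\rho_\infty$ is a.s.\ absolutely continuous and $\int_\R\bar g_\infty=1$ by Fubini.

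\emph{Proof of \eqref{e.mainconver}.}
For $f\in C_b(\R)$ set $\Phi_f(\mu):=\iint f(y'-y)\,\mu(dy)\mu(dy')$, so $\EE\hat{\E}_t f(\ell(t))=\EE\,\Phi_f(\tilde\rho_t)$ and $\int_\R f\,\bar g_\infty=\EE\,\Phi_f(\rho_\infty)$. As $\mu\mapsto\mu\otimes\mu$ is weakly continuous $\cP(\R)\to\cP(\R^2)$ and $(y,y')\mapsto f(y'-y)$ is bounded continuous, $\Phi_f$ is bounded and continuous on $\cP(\R)$. By (ii) the laws of the $\tilde\rho_t$ form a tight family in $\cP(\cP(\R))$, so with (i) one gets $\tilde\rho_t\Rightarrow\rho_\infty$ as $\cP(\R)$-valued random variables; the continuous mapping theorem and boundedness of $\Phi_f$ then give $\EE\,\Phi_f(\tilde\rho_t)\to\EE\,\Phi_f(\rho_\infty)$, i.e.\ \eqref{e.mainconver}. (Translation invariance also makes the choice of recentering irrelevant.)

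\emph{Proof of \eqref{e.tail}.}
Here the explicit form of the limit matters. Write $\rho_\infty(x)=e^{\mathcal R(x)}/N$ with $N=\int_\R e^{\mathcal R}$; reflecting the local Brownian regularity of the KPZ height function at its maximum, via a Williams-type decomposition at the peak, the two halves $\{-\mathcal R(y)\}_{y\ge0}$ and $\{-\mathcal R(-y)\}_{y\ge0}$ are independent copies of $\sigma$ times a standard Bessel$(3)$ process from $0$, for a deterministic constant $\sigma>0$. Thus $\bar g_\infty(x)=\EE[N^{-2}\int_\R e^{\mathcal R(y)+\mathcal R(y+x)}\,dy]$, and for $x\to+\infty$ one splits the $y$-integral over $y>0$, $-x<y<0$ and $y<-x$, the first and last being equal in expectation by the left--right symmetry of $\mathcal R$ (after the change of variable $y\mapsto -y-x$). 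The Markov property of a Bessel$(3)$ process $R$ and its monotonicity in the starting point give, for all $a\ge0$, $\mathbb E_a[e^{-\sigma R(x)}]\le\mathbb E_0[e^{-\sigma R(x)}]=:\gamma(x)$, with $\gamma(x)\le c_\sigma x^{-3/2}$ for all $x>0$ and $x^{3/2}\gamma(x)\to c_\sigma$; this is exactly where the exponent $\tfrac32$ comes from, since the Bessel$(3)$ transition density from the origin vanishes like $r^2$ at $r=0$, which under diffusive scaling produces $x^{-3/2}$. Multiplying by $|x|^{3/2}$ and passing to the limit by dominated convergence --- with a dominating function built from $\int_0^\infty\mathbb E_0[e^{-\sigma R(y)}]\,dy<\infty$ and from finite inverse moments of $N=N_++N_-$, where $N_\pm:=\int_0^\infty e^{-\sigma R^\pm}$ are independent exponential functionals of Bessel$(3)$ processes --- identifies $\lim_{|x|\to\infty}|x|^{3/2}\bar g_\infty(x)$; carrying the constants through the Bessel transition density and the laws of the $N_\pm$ yields $\lambda$ as in \eqref{e.asyginfinity}. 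The case $x\to-\infty$ is identical by symmetry.

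\emph{Main obstacle.}
The convergence \eqref{e.mainconver} is soft, a continuous-mapping consequence of (i)--(ii); the work is in \eqref{e.tail}, which is not a formal corollary of \cite{daszhu}. One must prove that the large-$|y|$ contribution to the overlap integral is $o(|x|^{-3/2})$ \emph{uniformly in $x$} --- the bound $e^{\mathcal R(y+x)}\le1$ is far too lossy, and the right tool is the uniform Markov estimate $\mathbb E_a[e^{-\sigma R(x)}]\le c_\sigma x^{-3/2}$ combined with $\int_0^\infty\mathbb E_0[e^{-\sigma R(y)}]\,dy<\infty$ --- and one must justify interchanging $x\to\infty$ with the expectation, requiring uniform integrability of $N^{-2}$ against the (non-constant) rescaled overlap integral, hence good inverse moments for $N$ and the asymptotic decoupling of $\mathcal R$ near its peak from its values far out. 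Extracting the exact constant $\lambda$, not merely the exponent, is the most delicate bookkeeping.
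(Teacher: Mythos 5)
Your proof of the convergence \eqref{e.mainconver} is essentially the paper's argument: recenter at the mode, invoke the Das--Zhu functional convergence together with their no-escape-of-mass estimate, and apply a continuous-mapping/uniform-integrability argument to the bounded functional $\mu\mapsto\iint f(y'-y)\,\mu(dy)\,\mu(dy')$. This part is fine (the paper's Proposition~2.1 carries out exactly this $\eps$--$\delta$ bookkeeping), and it correctly identifies $\bar g_\infty(x)=\EE\int_\R\rho_\infty(y)\rho_\infty(y+x)\,dy$.

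For the tail \eqref{e.tail}, however, there is a genuine gap. Your Bessel-$3$ estimates ($\E_a[e^{-R(x)}]\le \E_0[e^{-R(x)}]\sim c\,x^{-3/2}$, plus integrability of $\int_0^\infty\E_0[e^{-R(y)}]dy$ and of inverse moments of $N$) can plausibly be assembled into the \emph{upper bound} $\bar g_\infty(x)=O(|x|^{-3/2})$, and they correctly locate the origin of the exponent $\tfrac32$. But the theorem also asserts an explicit formula \eqref{e.exginfinity} for $\bar g_\infty$ and an explicit positive constant $\lambda$ in \eqref{e.asyginfinity}, and your argument supplies no mechanism to compute either. The issue is not mere bookkeeping: the dominant event as $x\to\infty$ is that the potential develops a \emph{second valley} near $y+x$, and on this event the contribution of that second valley to the normalization $N$ is \emph{not} negligible --- this is visible in \eqref{e.asyginfinity}, where the factor $(z_1+z_2)^{-2}$ couples the areas of the two valleys, each weighted by the nontrivial conditional density $\mathsf{h}(\cdot,y)$ of an exponential functional given the endpoint height $y$. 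The decoupling you invoke (bounding $e^{-\B(y+x)}$ by its conditional expectation and treating $N$ via the two half-line functionals $N_\pm$ split at the peak) discards exactly this correlation and would produce the wrong constant. What is missing is an exact joint law of (endpoint height, exponential functional) for the relevant process; the paper obtains this by approximating $\rho_\infty$ by the periodic Brownian-bridge density $\rho_L$ (Propositions~3.2 and~3.4), exploiting its stationarity to reduce to $L\,\EE\rho_L(0)\rho_L(x)$, and then applying Yor's formula for the conditional density $a_t(\cdot\,;x)$ of $\int_0^te^{2W_s}ds$ given $W_t=x$ (Lemma~3.6); the $x^{-3/2}$ then emerges as $G_x(2y)\sim(2\pi x)^{-1/2}$ times $a_{x/4}(z_1;y)\sim\tfrac4x\mathsf{h}(z_1,y)$. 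Without this ingredient (or an equivalent exact computation for exponential functionals of Bessel-$3$), neither \eqref{e.exginfinity} nor the value of $\lambda$ can be established, so the second half of the theorem remains unproved in your proposal.
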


\subsection{Context}
The directed polymer in a random environment is a prototypical model of disordered systems in non-equilibrium statistical mechanics, and has been studied extensively since the 1980s. Here we will not attempt to review the large body of literature; instead, we refer to the excellent book \cite{comets2017directed} and the references cited there. A salient feature of the model is the localization phenomenon in the so-called low temperature regime, which is the physically interesting regime in which the polymer path is constrained to a small set of states that are energetically favorable. In the high temperature regime, the   paths behave diffusively as the Brownian motion, which is somewhat easier to analyze. When the temperature is low, the paths are expected to behave super-diffusively and at the same time localize around some favorite region. While such behaviors are notoriously difficult to quantify, mathematical studies    have achieved important progress in recent years. This has involved both the study of the fluctuations of the endpoint displacement and the free energy, which falls into the $1+1$ KPZ universality class \cite{amir2011probability,BQS11,bc1,bc,BCF14,BCFV15,bcr,corwin2012kardar,corwinh,haoshen,Qua12,quastel2015one,timo,timo1}, and also the quantitative analysis of localization behaviors \cite{bakhtin2020localization,erik2,bates2020endpoint,erik1,broker2019localization,C2,comets1,comets4,comets2,comets3,daszhu}.

Our study of the localization length defined in \eqref{e.defell} is motivated by the physics work of Hwa-Fisher \cite{fisher}, where the same object was analyzed with a focus on ``anomalous fluctuations'': fluctuations that result from rare events but that nonetheless are expected to play an important role in determining many thermodynamic properties and average correlations. We illustrate this phenomenon through a folklore example below \cite{fisher1}.

Consider any random environment $\xi$ that is statistically invariant under shear transformations, that is, for any $\theta\in\R$,  
\[
	\{\xi(t,x)\}_{t,x}\overset{\text{law}}{=} \{\xi(t,x+\theta t)\}_{t,x}.
\]   One can check that any Gaussian process that is white in time and stationary in space satisfies this condition, and in particular, it applies to the spacetime white noise considered in this paper. Define the tilted free energy $F_\theta(t):=\log \E e^{\theta B_t} e^{\int_0^t \xi(s,B_s)ds}$. Then by the Girsanov theorem and shear invariance, we immediately obtain
\[
\begin{aligned}
F_\theta(t)&=\log \E e^{\theta B_t-\frac12\theta^2 t}e^{\int_0^t \xi(s,B_s)ds}+\frac12\theta^2 t\\
&=\log \E e^{\int_0^t \xi(s,B_s+\theta s)ds}+\frac12\theta^2 t\\
	\overset{\text{law}}&{=} F_0(t)+\frac12\theta^2 t.
\end{aligned}
\]
On the one hand, the quenched variance of the polymer endpoint is 
\[
\mathsf{V}_q(t):=\int_\R x^2 \rho(t,x)dx-(\int_\R x\rho(t,x)dx)^2=\partial_\theta^2 F_\theta(t)\big|_{\theta=0}.
\]
Taking the average on the random environment leads to
\begin{equation}\label{e.qvart}
\begin{aligned}
\EE \mathsf{V}_q(t)&=\EE \partial_\theta^2 F_\theta(t)\big|_{\theta=0}=\partial_\theta^2 \EE F_\theta(t) \big|_{\theta =0}\\
&=\partial_\theta^2 (\EE F_0(t)+\frac12\theta^2 t) \big|_{\theta=0}=t.
\end{aligned}
\end{equation}
On the other hand, since $B_t^1,B_t^2$ are independently sampled from $\hat{\Pb}_t$, we actually have 
\[
\hat{\E}_t |\ell(t)|^2=\hat{\E}_t |B_t^1-B_t^2|^2=2 \mathsf{V}_q(t),
\]
which, combined with \eqref{e.qvart}, implies that $\EE \hat{\E}_t |\ell(t)|^2=2t$. 

Our main result implies that, in the annealed sense, $\ell(t)$ is of order $O(1)$ for $t\gg1$. That is, for any $\eps>0$, there exists an $M<\infty$ such that
\[
\limsup_{t\to\infty}\PP\hat{\Pb}_t [|\ell(t)|<M]>1-\eps.
\] While this is consistent with the localization phenomenon, it also suggests that the blowup of $\EE \hat{\E}_t |\ell(t)|^2$ as $t\to\infty$ indeed results from the rare events in which $\ell(t)$ is large. The fact that the limiting distribution of $\ell(t)$ has a heavy tail  provides a further evidence for this. As a matter of fact, it was conjectured in \cite[Eq. (3.18)]{fisher} that the annealed density of $\ell(t)$, which was denoted by $\bar{g}(t,x)$, takes the following scaling form
\begin{equation}\label{e.ficon}
\bar{g}(t,x)\approx \frac{1}{|x|^{3/2}} w(\frac{x}{t^{2/3}}), \quad\quad \text{ for } |x|\gg1,
\end{equation}
with the scaling function $w$  decaying rapidly at infinity. The derived tail asymptotics in \eqref{e.tail} can be viewed as a mathematical justification of \eqref{e.ficon} at $t=\infty$. 

We do not expect the limiting density $\bar{g}_\infty(\cdot)$ to be universal. In other words, if one considers another polymer model, with a different reference measure or a different random environment, then the corresponding limit is likely to be different. In the convergence of \eqref{e.mainconver}, there is no rescaling in the $x-$variable, so it is a type of local behaviors of $\rho(t,\cdot)$ we were probing, and this is supposed to be model-dependent. Nevertheless, the $\tfrac32$-power law decay of $\bar{g}_\infty(\cdot)$ is expected to be universal and to apply to all models in the universality class. To the best of our knowledge, this is the first mathematical justification of this fact.

It is worth noting that our result does not imply the convergence of $\mathsf{V}_q(t)$ as $t\to\infty$. Although the localization length $\ell(t)$ is of order $O(1)$ as $t\to\infty$, the quenched
variance $\mathsf{V}_q(t)$ may pick up the tail of $\rho(t, \cdot)$ and blow 
 up as $t\to\infty$. 
There are two types  of
rare events in the model, one coming from the tail of $\rho(t,\cdot)$, for each realization of $\xi$, and the other  from the fluctuations of $\xi$.
Our result is on the annealed law of $\ell(t)$. As will become clear later, the power law decay of the limiting density results from the rare fluctuations of $\xi$, see Remark~\ref{r.quench} below.
In \cite{KL22} the simulation of a discrete polymer model showed that the typical quenched variance can grow as $\mathsf{V}_{q}(t) \gtrsim t^c$ for $c>0$, which suggests that  the tail behavior of $\rho(t,\cdot)$ could
be more complicated than the current localization picture in the literature.

It is known that the study of the directed polymer in a random environment can be reduced to that of   a diffusion with a random drift   given by the solution to a stochastic Burgers equation, see e.g. \cite[Theorem 2]{yk}. So what we considered in this paper is equivalent to studying the separation of two passive scalars in the Burgers flow. A similar problem was addressed in \cite{knr}, where the random environment is given by a Gaussian process with long-range correlations. There, different behaviors of the particle displacements and the separation were observed. In a sense, it is a similar phenomenon to that displayed here with the two-particle separation and the single-particle displacement observed on different scales. This perspective has also been taken in the physics literature, see e.g. \cite{sb} and the references therein.

To obtain the limiting distribution of $\ell(t)$, we rely on a recent result of Das-Zhu \cite{daszhu}, where they studied the same model and proved that the random density $\rho(t,\cdot)$ converges as $t\to\infty$ after a random shift, see \eqref{e.daszhu} below. Their result suggests the convergence in law of $\ell(t)$ as well as a probabilistic representation of the limiting distribution  in terms of a two-sided Bessel-3 process, see \eqref{e.ginfinity} below. The main contribution here is to obtain an explicit formula for the density, using which we further derive the $\tfrac32$-power law decay. Our approach is based on an approximation of the Bessel-3 by a Brownian bridge near its maximum, together with the well-known density formula for certain exponential functionals of the Brownian bridge, derived by Yor \cite{yor1992some}. More precisely, a key object to study is the random density defined in \eqref{e.defrhoL} below:
\begin{equation}\label{eq:rhoLintrodef}
\rho_L(x)=\frac{e^{B_L(x)}}{\int_0^L e^{B_L(x')}dx'},
\end{equation}
where $B_L$ is a Brownian bridge connecting $(0,0)$ and $(L,0)$. It turns out that the density of $\ell(t)$ for large $t\gg1$ can be approximated by the two point covariance function of $\rho_L(\cdot)$ with large $L\gg1$, see Proposition~\ref{p.condensity} below. The random density $\rho_L(\cdot)$ itself is a stationary random process and its multipoint covariance function can be computed through Yor's formula, and that is how we obtain the main result of this paper.

The random density $\rho_L(\cdot)$ defined in \eqref{eq:rhoLintrodef} turns out to also be important in the study of other physical models, such as 1D Liouville quantum mechanics, 1D disordered
  supersymmetric quantum mechanics, and Sinai's model of diffusion in a random environment, among others. See for example \cite{matsumoto2005exponential,comtet,ledoussal,kk}. As a matter of fact, the
  two-point covariance function of $\rho_L(\cdot)$ has been computed e.g.\ in \cite{ledoussal} and \cite{comtet} on the level of physics rigor, and the $\tfrac32$-power law decay has also been derived
  in those models, see \cite[Section 3, Eq.\ (14)]{comtet} and \cite[Eq.\ (57)]{ledoussal}.
  As pointed out in~\cite{comtet,ledoussal,fisher}, contributions to the annealed $\tfrac32$-power law decay come primarily from the rare events on which another local minimum of the
    potential occurs far away from the global minimum but has comparable depth.  When the density takes the
    form~(\ref{eq:rhoLintrodef}), the probability of having such a second valley at distance $x$ is related to the probability of a Bessel-3 process returning to $0$ at time $x$, which gives $x^{-3/2}$.
    However, we believe that that $\tfrac32$-power law will also hold for other polymer models, where the endpoint distribution is not of the form~(\ref{eq:rhoLintrodef}).
In fact, \cite[Eq.\ (3.18)]{fisher} states that in all dimensions, the power-law decay exponent $\nu$ is related to the wandering exponent~$\zeta$ of the directed polymer by $\nu = d+2 - \zeta^{-1}$.
In one dimension, $\nu=\tfrac32$ is a consequence of $\zeta=\tfrac23$ from the KPZ universality. We hope that this connection can shed some light on the wandering exponents in higher dimensions.

The above random density $\rho_L(\cdot)$ can be viewed as the invariant measure of the directed polymer on a cylinder, which is related to the invariant measure of the KPZ equation with a periodic boundary condition. We refer to \cite{barraquand2021steady,BKWW21,CK21} for recent developments on invariant measures for the \emph{open} KPZ equation, in which non-periodic boundary conditions are imposed.

Throughout the paper, we  use $G_t(x)=(2\pi t)^{-1/2} \exp(-x^2/(2t))$ to denote the standard heat kernel.

\subsection*{Acknowledgements}
A.D. was partially supported by the NSF Mathematical Sciences Postdoctoral Research Fellowship Program through grant no.\ DMS-2002118.
Y.G. was partially supported by the NSF through DMS-2203014. We thank the anonymous referee for suggesting some relevant references.

\section{Convergence in law}

Recall that $\ell(t)=B_t^1-B_t^2$, with $B_t^1,B_t^2$ sampled independently from the quenched density $\rho(t,\cdot)$. 
Recall that $g(t,\cdot)$ is the quenched density of $\ell(t)$:
\[
g(t,x)=\int_\R \rho(t,y)\rho(t,y+x)dy.
\]
Recall that $\bar{g}(t,x)$ is the annealed density, so we have
\[
\bar{g}(t,x)=\EE g(t,x).
\] 

An interesting recent result by Das-Zhu \cite[Theorem 1.5]{daszhu} states that, for each $t>0$, almost surely there exists a unique mode of $\rho(t,\cdot)$, denoted by $x_o(t)$, and the following weak convergence on $C(\R)$ holds:
 \begin{equation}\label{e.daszhu}
 \{\rho(t,x_o(t)+x\}_{x\in\R}\Rightarrow \{\rho_\infty(x)\}_{x\in\R} \quad\quad \mbox{ as }t\to\infty.
 \end{equation}
Here $C(\R)$ is endowed with the uniform-on-compact topology, and the limit takes the form
 \[
 \rho_\infty(x)=\frac{e^{-\B(x)}}{\int_\R e^{-\B(x')}dx'},
 \]
  where $\B$ is a two-sided Bessel-3 process.
  
  By a change of variable, one can write 
  \[
  \bar{g}(t,x)=\EE \int_\R \rho(t,y+x_o(t))\rho(t,y+x_o(t)+x)dy.
  \]
  Inspired by \eqref{e.daszhu}, we define
  \begin{equation}\label{e.ginfinity}
  \bar{g}_\infty(x)=\EE \int_\R \rho_\infty(y)\rho_\infty(y+x)dy\in L^1(\R).
  \end{equation}
Therefore, the proof of Theorem~\ref{t.mainth} reduces to   the study of the function $\bar{g}_\infty(\cdot)$ and 
the following  proposition:
\begin{proposition}\label{p.wkcon}
For any $f\in C_b(\R)$, we have as $t\to\infty$, 
\[
\int_{\R}f(x)\bar{g}(t,x)\to \int_{\R} f(x)\bar{g}_\infty(x)dx.
\]
\end{proposition}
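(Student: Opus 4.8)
The plan is to deduce Proposition~\ref{p.wkcon} from the Das--Zhu convergence~\eqref{e.daszhu} by lifting it to a weak convergence of two-dimensional probability measures and then projecting down to the law of $\ell(t)$. Write $\widetilde{\rho}_t(y):=\rho(t,x_o(t)+y)$ for the quenched density recentered at its mode; recall that $\rho(t,\cdot)$ is almost surely a genuine probability density (since $u(t,\cdot)>0$ a.s.\ and $\EE\int_\R u(t,x)\,dx=\int_\R G_t=1$ force $\int_\R u(t,\cdot)<\infty$ a.s.), and likewise $\rho_\infty=e^{-\B}/\int_\R e^{-\B}$ is a.s.\ a probability density, the two-sided Bessel-$3$ process $\B$ being transient so that $e^{-\B}$ is a.s.\ integrable. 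Let $\mu_t$ be the probability measure on $\R^2$ with density $(y_1,y_2)\mapsto\EE[\widetilde{\rho}_t(y_1)\widetilde{\rho}_t(y_2)]$ --- the annealed law of a pair of points drawn independently from $\widetilde{\rho}_t$ --- and let $\mu_\infty$ be the one with density $(y_1,y_2)\mapsto\EE[\rho_\infty(y_1)\rho_\infty(y_2)]$. Using the expression for $\bar{g}(t,\cdot)$ recorded just before~\eqref{e.ginfinity}, the change of variables $(y,x)\mapsto(y_1,y_2)=(y,y+x)$, and Fubini (the inner $x$-integral being bounded by $\|f\|_\infty$), one obtains, for every bounded measurable $f$,
\[
\int_\R f(x)\,\bar{g}(t,x)\,dx=\int_{\R^2}f(y_2-y_1)\,d\mu_t(y_1,y_2);
\]
in other words the annealed law of $\ell(t)$ is the pushforward $T_*\mu_t$ under $T(y_1,y_2):=y_2-y_1$, while by~\eqref{e.ginfinity} the pushforward $T_*\mu_\infty$ has density $\bar{g}_\infty$.

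Next I would prove that $\mu_t\to\mu_\infty$ vaguely. Given $h\in C_c(\R^2)$ with $\mathrm{supp}\,h\subset[-R,R]^2$, the functional $\Psi_h(\phi):=\int_{\R^2}h(y_1,y_2)\phi(y_1)\phi(y_2)\,dy_1\,dy_2$ depends only on $\phi|_{[-R,R]}$, is continuous on $C(\R)$ in the topology of uniform convergence on compacts, and satisfies $|\Psi_h(\phi)|\le\|h\|_\infty$ whenever $\phi$ is a probability density. Truncating $\Psi_h$ at $\pm\|h\|_\infty$ yields a bounded continuous functional on $C(\R)$ that coincides with $\Psi_h$ on probability densities, so~\eqref{e.daszhu} (weak convergence on the Polish space $C(\R)$), together with Fubini, gives $\int_{\R^2}h\,d\mu_t=\EE[\Psi_h(\widetilde{\rho}_t)]\to\EE[\Psi_h(\rho_\infty)]=\int_{\R^2}h\,d\mu_\infty$.

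Then I would upgrade vague to weak convergence, which is automatic once one knows that $\mu_\infty$ is a probability measure: for $\eps>0$ choose $h\in C_c(\R^2)$ with $0\le h\le1$ and $\int h\,d\mu_\infty>1-\eps$, so that $\mu_t\big((\mathrm{supp}\,h)^c\big)\le1-\int h\,d\mu_t<\eps$ for all large $t$; this gives tightness of $\{\mu_t\}$, and any subsequential weak limit is a probability measure agreeing with $\mu_\infty$ on $C_c(\R^2)$, hence equals $\mu_\infty$. Therefore $\mu_t\Rightarrow\mu_\infty$ on $\R^2$, and by the continuous mapping theorem $T_*\mu_t\Rightarrow T_*\mu_\infty$ on $\R$; testing against $f\in C_b(\R)$ yields $\int_\R f(x)\,\bar{g}(t,x)\,dx\to\int_\R f(x)\,\bar{g}_\infty(x)\,dx$, which is the assertion of the proposition.

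The step I expect to be the main obstacle is exactly the control of escaping mass: the convergence~\eqref{e.daszhu} is only uniform on compacts, so a priori a bump of $\rho(t,\cdot)$ could slide off to infinity, invisible in the limit yet contributing to the law of $\ell(t)$. Working with the two-dimensional laws $\mu_t$ rather than directly with $\bar{g}(t,\cdot)$ is what resolves this: test functions in $C_c(\R^2)$ pair continuously with $\widetilde{\rho}_t\otimes\widetilde{\rho}_t$ through the uniform-on-compact topology, while total mass is pinned at $\mu_t(\R^2)=\mu_\infty(\R^2)=1$, so vague convergence to the probability measure $\mu_\infty$ forbids any loss of mass. By contrast, the functional $\phi\mapsto\int_{\R^2}h(y_2-y_1)\phi(y_1)\phi(y_2)\,dy_1\,dy_2$ attached to a one-dimensional test function $h$ is \emph{not} continuous in the uniform-on-compact topology --- its relevant region $\{|y_2-y_1|\le R\}$ is unbounded in $\R^2$ --- which is precisely why one has to route the argument through $\R^2$ and the continuous mapping theorem rather than argue directly with $\bar{g}(t,\cdot)$.
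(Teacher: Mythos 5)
Your proof is correct, and it takes a genuinely different route from the paper's. The paper controls the mass that could escape to infinity by importing a second, quantitative input from Das--Zhu, namely the quenched tail estimate of \cite[Prop.\ 7.2]{daszhu}, which says that $\PP\bigl(\int_{[-K,K]^c}\rho(t,\cdot+x_o(t))>\eps\bigr)$ is small uniformly in large $t$ once $K$ is large; it then splits $\int f\bar g(t,\cdot)-\int f\bar g_\infty$ into a compactly supported piece handled by \eqref{e.daszhu} and three remainder pieces handled by that estimate. You instead observe that the two-dimensional annealed laws $\mu_t$ and $\mu_\infty$ are exact probability measures (because $\rho(t,\cdot)$ and $\rho_\infty$ integrate to $1$ almost surely), prove vague convergence $\mu_t\to\mu_\infty$ directly from \eqref{e.daszhu} via compactly supported test functions on $\R^2$, and then invoke the standard fact that vague convergence of probability measures to a probability measure is automatically weak convergence; the continuous mapping theorem for $(y_1,y_2)\mapsto y_2-y_1$ finishes the argument. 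The tightness that the paper obtains from the external tail estimate, you recover for free from conservation of total mass. Your route is more self-contained (only the weak convergence \eqref{e.daszhu} plus the a.s.\ normalization of $\rho_\infty$ are needed) and your truncation of $\Psi_h$ at $\pm\|h\|_\infty$ is actually a cleaner justification of the ``bounded continuous functional'' step than the paper's own phrasing, since $\phi\mapsto\int h\,\phi\otimes\phi$ is unbounded on all of $C(\R)$ and only bounded on probability densities. The paper's route, on the other hand, yields slightly more information en route --- a uniform-in-$t$ quenched control of the escaping mass --- which is of independent interest but not needed for the annealed statement being proved. You also correctly diagnose why one must pass through $\R^2$: the set $\{|y_2-y_1|\le R\}$ is not compact, so testing $\bar g(t,\cdot)$ against $C_c(\R)$ functions does not pair continuously with the uniform-on-compact topology.
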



We prove Proposition~\ref{p.wkcon} first, since it is a consequence of \eqref{e.daszhu}.

\begin{proof}[Proof of Proposition~\ref{p.wkcon}]
The tail estimate in \cite[Prop 7.2]{daszhu} (see also the discussion after Prop~7.2 and in Section~1.4) implies that for all $\eps>0$, if we define
\begin{equation*}
 A_{t,\eps,K} = \Big\{  \int_{[-K,K]^c} \rho \big(  t , s + x_o(t) \big) \, ds > \eps \Big\},
\end{equation*}
then
\begin{equation}
  \label{e.tail-esti}
  \limsup_{K \to \infty} \limsup_{t \to \infty} \PP( A_{t,\eps,K} ) = 0.
\end{equation}
Hence, for all $\eps,\delta>0$, we have $\PP (A_{t,\eps,K}) < \delta$ if  $K$ and $t$ are sufficiently large.

Without loss of generality let us assume that $|f(x)| \le 1$.  
Writing $q_t(x,y) = \rho \big( t, y+x_o(t) \big) \rho \big(t,x+y+x_o(t) \big)$, ${q_\infty(x,y) = \rho_{\infty}(y) \rho_{\infty}(x+y)}$ and $D_L=[-L,L]^2$, we have
\begin{align*}
  & \Big| \int_{\R} f(x) \bar{g}(t,x) \,dx - \int_{\R} f(x) \bar{g}_{\infty}(x) \, dx
  \Big|\\
  & \quad= \Big| \EE \int_{\R^2} f(x)q_t(x,y) \, dxdy - \EE\int_{\R^2} f(x) q_{\infty}(x,y) \, dxdy    \Big|
  \\
  &\quad\le \Big| \EE \int_{D_{2K}} f(x)q_t(x,y) \, dxdy - \EE \int_{D_{2K}} f(x) q_{\infty}(x,y) \, dxdy    \Big|
    \\&\qquad+ \int_{D_{2K}^c} \EE q_{\infty}(x,y) \, dxdy \\
&\qquad  + \EE \ONE_{A_{K,\eps}} \int_{D_{2K}^c}  q_t(x,y)\, dxdy
   + \EE \ONE_{A_{K,\eps}^c} \int_{D_{2K}^c}  q_t(x,y) \,dx dy.
\end{align*}
 By the weak convergence~(\ref{e.daszhu}), the first term has limit $0$ as $t \to \infty$, since $\rho \mapsto \int_{D_{2K}} f(x) \rho(y) \rho(x+y) \,dxdy$ is a bounded continuous
 functional on $C(\R)$ with the uniform-on-compact
topology.
The second term can be made less than $\eps$ by choosing $K$ large since $\int_{\R^2} \EE q_{\infty}(x,y)\, dxdy = \|\bar{g}_{\infty}\|_{L^1(\R)}<\infty$.
The third term is bounded by $\PP(A_{t,\eps,K}) \le \delta$.
The last term is bounded by $2\eps$ since 
\begin{align*}
&\int_{D_{2K}^c}  q_t(x,y) \,dx dy = \int_{|z-y| \ge 2K \text{ or } |y| \ge 2K}  \rho \big( t, y+x_o(t) \big) \rho \big(t,z+x_o(t) \big) \, dy dz\\
&  \quad\le \int_{|y| \ge K \text{ or } |z| \ge K}  \rho \big( t, y+x_o(t) \big) \rho \big(t,z+x_o(t) \big) \,dydz \le 2\eps
\end{align*}
on the event $A_{t,\eps,K}^c$.
Therefore,
\begin{equation*}
\limsup_{t \to \infty} \Big| \int_{\R} f(x) \bar{g}(t,x) \,dx - \int_{\R} f(x) \bar{g}_{\infty}(x) \, dx
  \Big| \le \eps+ \delta +  2\eps
\end{equation*}
and the proposition follows from choosing $\eps$ and $\delta$ arbitrarily small.
\end{proof}

\section{Derivation of the density formula}

To obtain the explicit formula for $\bar{g}_\infty(x)$ defined in \eqref{e.ginfinity}  is the main technical challenge. There might be a direct way of studying exponential functionals of Bessel-3 through which one finds the formula of $\bar{g}_\infty(\cdot)$. The idea we adopt here is to approximate the Bessel-3 process by  a Brownian bridge near its maximum  and use the well-known formula for the Brownian bridge derived by Yor.

\subsection{Approximation by Brownian bridge}

Define 
\begin{equation}\label{e.defrhoL}
\rho_L(x)=\frac{e^{B_L(x)}}{\int_0^L e^{B_L(x')}dx'}, \quad\quad x\in[0,L],
\end{equation}
where $B_L(\cdot)$ is a Brownian bridge from $(0,0)$ to $(L,0)$. We extend   $B_L(\cdot)$ periodically so that it is defined on $\R$.   Let $x_L$ be the (almost-surely) unique mode  of $B_L(\cdot)$ in $[0,L]$, i.e. \[
x_L = \operatorname*{argmax}\limits_{x\in [0,L]}B_L(x).
\] 

We have the following proposition.
\begin{proposition}\label{p.conrho}
For any $M>0$, we have 
\begin{equation}
\{\rho_L(x_L+x)\}_{x\in[-L/2,L/2]}\Rightarrow \{\rho_\infty(x)\}_{x\in\R}\label{eq:conrho}
\end{equation}
in distribution on $C([-M,M])$, as $L\to\infty$.
\end{proposition}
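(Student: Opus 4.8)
The plan is to reduce the statement to a local convergence result for the Brownian bridge near its maximum, via a decomposition analogous to Williams' path decomposition, and then to recognize the limiting object. First I would recall the well-known fact that if $B_L$ is a Brownian bridge from $(0,0)$ to $(L,0)$ and $x_L$ is its (a.s.\ unique) argmax with $m_L := B_L(x_L)$, then, conditionally on $(x_L, m_L)$, the two pieces $\{m_L - B_L(x_L - x)\}_{x \ge 0}$ and $\{m_L - B_L(x_L + x)\}_{x \ge 0}$ (run until they hit $-m_L$, i.e.\ until $B_L$ returns to $0$) are independent and each is distributed as a Bessel-3 process started at $0$ and conditioned appropriately. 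Since $m_L \to \infty$ as $L \to \infty$ (indeed $m_L \sim \sqrt{L}$ in scale), the conditioning becomes vacuous on any fixed compact window $[-M,M]$, and the two halves converge jointly to two independent Bessel-3 processes, which glue to a two-sided Bessel-3 process $\B$. This is exactly the same mechanism behind the Das--Zhu limit \eqref{e.daszhu}, so I would either cite the relevant decomposition (e.g.\ from the literature on the Brownian bridge conditioned on its maximum) or give a short self-contained argument.

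Concretely, the steps would be: (i) Center $B_L$ at its argmax and introduce $W_L(x) := m_L - B_L(x_L + x)$ for $x \in [-L/2, L/2]$, so that $W_L \ge 0$, $W_L(0) = 0$, and $\rho_L(x_L + x) = e^{-W_L(x)} / \int_{-L/2}^{L/2} e^{-W_L(x')}\,dx'$. (ii) Show that $(W_L(x))_{x \in [-M,M]} \Rightarrow (\B(x))_{x \in [-M,M]}$ on $C([-M,M])$, where $\B$ is a two-sided Bessel-3 process; this is the heart of the matter. (iii) Upgrade the convergence of $W_L$ on each window to convergence of the normalized density: since $\B$ is transient, $\int_\R e^{-\B(x')}\,dx' < \infty$ a.s., and one needs a tail estimate showing $\int_{|x'| > K} e^{-W_L(x')}\,dx'$ is small uniformly in $L$ with high probability, so that the normalizing constants converge and the map $W \mapsto e^{-W(\cdot)}/\int e^{-W}$ is continuous at $\B$. (iv) Conclude $\{\rho_L(x_L + x)\}_{x \in [-M,M]} \Rightarrow \{\rho_\infty(x)\}_{x \in [-M,M]}$, which is \eqref{eq:conrho}.

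For step (ii), the cleanest route I would take is to use the exact law of the Brownian bridge straddling its maximum. Vervaat's transform and the Denisov decomposition give: conditionally on $x_L = s$ and $m_L = a$, the path $(a - B_L(s - u))_{0 \le u \le s}$ is a Bessel-3 bridge from $0$ to $a$ over time $s$, and independently $(a - B_L(s + u))_{0 \le u \le L - s}$ is a Bessel-3 bridge from $0$ to $a$ over time $L - s$. As $L \to \infty$ one has $s \approx L U$ for some nondegenerate $U$ and $a = m_L \to \infty$, so each Bessel-3 bridge, restricted to $[0,M]$, converges to an (unconditioned) Bessel-3 process; the two halves are independent, giving the two-sided process. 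I would need to check the joint convergence and that the randomness of $(x_L, m_L)$ does not obstruct the limit on compacts — it does not, because the relevant conditional laws depend on $(s, a)$ only through quantities that degenerate in the right direction.

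The main obstacle I expect is step (iii): transferring convergence of the potential $W_L$ on compact windows to convergence of the \emph{normalized} density requires a uniform-in-$L$ integrability/tightness statement for $\int e^{-W_L}$, i.e.\ control on how much mass $\rho_L$ puts far from its mode. One cannot read this off from finite-dimensional convergence alone. I would establish it by a direct estimate on the Brownian bridge: the probability that $B_L$ comes within $O(1)$ of its maximum at some point at distance $\ge K$ from $x_L$ decays in $K$ uniformly in $L$ (this is the "second valley" event discussed in the introduction, and is governed by a Bessel-3 return probability $\sim K^{-3/2}$), which gives $\EE \int_{|x - x_L| > K} \rho_L(x)\,dx \to 0$ as $K \to \infty$ uniformly in $L$. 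Combined with Proposition~\ref{p.wkcon}-style reasoning (or a Skorokhod representation and dominated convergence on the integrals), this closes the argument.
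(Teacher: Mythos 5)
Your proposal is correct in outline and reaches the statement by a genuinely different route for the key lemma. Where the paper proves the local limit of the recentered bridge (its Lemma~\ref{l.wkconBBB}) via the Vervaat transform --- identifying $\{B_L(x_L+\cdot)-B_L(x_L)\}$ with minus a periodized Brownian excursion, writing the excursion as the modulus of a $3$-dimensional Brownian bridge, and letting the periodized $3$d bridge converge to a two-sided $3$d Brownian motion --- you instead condition on $(x_L,m_L)$ and invoke the Denisov/Imhof-type decomposition of the bridge at its maximum into two independent Bessel-$3$ bridges from $0$ to $m_L$, then let these bridges relax to free Bessel-$3$ processes as $m_L\sim\sqrt L\to\infty$. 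Both are standard and valid; the paper's route avoids having to prove the bridge-to-free-process convergence (which in your approach requires checking that the Radon--Nikodym derivative $p_{s-M}(X_M,a)/p_s(0,a)$ of the Bessel-$3$ bridge on $[0,M]$ tends to $1$, jointly over the randomness of $(x_L,m_L)$), while yours makes the independence of the two halves and the appearance of the \emph{two-sided} Bessel-$3$ completely transparent. Minor slip: the left and right pieces are run until they hit $m_L$ (when $B_L$ returns to $0$), not $-m_L$.

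On your step (iii), you have correctly identified the real obstacle --- a uniform-in-$L$ control of $\int_{|x'|>K}e^{-W_L(x')}\,dx'$ --- and this is indeed where the paper spends most of its effort. The paper's implementation is more direct than the ``second valley'' probability estimate you sketch: it bounds the \emph{unnormalized} far-field integral in expectation, using the explicit one-point density of the periodized excursion $B_L(x_L)-B_L(x_L+x')$ to get $\EE\int_{|x'|\ge M'}e^{B_L(x_L+x')-B_L(x_L)}\,dx'\le C(M')^{-1/2}$, and then concludes by Markov's inequality together with a lower bound on the near-field integral. Your version (bounding $\EE\int_{|x-x_L|>K}\rho_L(x)\,dx$, i.e.\ the normalized mass) would also suffice but involves a ratio and is harder to compute; if you pursue your route, I would recommend switching to the unnormalized quantity, for which the excursion (equivalently Bessel-$3$ bridge) marginal gives a one-line moment bound. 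With that adjustment your argument closes completely.
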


\begin{remark}
Since the Brownian bridge is the unique invariant measure for the KPZ equation on the torus, the law of $\rho_L$ is actually the unique invariant measure for the endpoint distribution of the directed polymer on a cylinder. Thus, the above result can be interpreted as follows. Suppose we start the stochastic heat equation on a torus of size $L$, with the  Dirac initial data. On one hand, if we first let $L\to\infty$, for each $t>0$, the endpoint distribution of the polymer on the cylinder converges to the endpoint density on the whole space, and the result of Das-Zhu \eqref{e.daszhu} further leads to the convergence of the density as $t\to\infty$ after a random centering. On the other hand, if we let $t\to\infty$ first, the endpoint distribution of the  directed polymer on the cylinder converges to $\rho_L(\cdot)$, then we recenter $\rho_L$ around its mode and send $L\to\infty$, the above proposition shows that we obtain the same limit. In other words, the two limits of $t\to\infty$ and $L\to\infty$ commute. From this perspective, the approximation by Brownian bridge is somewhat natural.
 \end{remark}

To prove the above result, we use the well-known fact that a Brownian bridge near the maximum is a Bessel-3, stated in the following form:
\begin{lemma}\label{l.wkconBBB}
For any $M>0$, we have 
\[
\{B_L(x_L+x)-B_L(x_L)\}_{x\in[-L/2,L/2]}\Rightarrow \{-\B(x)\}_{x\in\R}
\]
in distribution on $C([-M,M])$, as $L\to\infty$.
\end{lemma}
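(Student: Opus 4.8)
The plan is to combine the classical path decomposition of a Brownian bridge at its maximum with the elementary fact that a long three-dimensional Bessel bridge, viewed near its starting point, is close to a Bessel-3 process. First I would reduce to the reflected statement: it suffices to show that $\{B_L(x_L)-B_L(x_L+x)\}_{x\in[-L/2,L/2]}$ converges on $C([-M,M])$ to $\{\B(x)\}_{x\in\R}$, since $g\mapsto -g$ is a homeomorphism of $C([-M,M])$ and the desired limit is then $\{-\B(x)\}$. I would also record that the argmax of a Brownian bridge on $[0,L]$ is uniformly distributed on $[0,L]$ --- this follows from Vervaat's theorem, or directly from the invariance of the bridge law under cyclic shifts --- so that $x_L/L\Rightarrow U\sim\mathrm{Unif}(0,1)$; in particular $\PP(x_L\in(M,L-M))\to1$, and for any fixed $\delta>0$, $\PP(x_L/L\in(\delta,1-\delta))\to1$, so on these high-probability events neither the periodic wrap-around nor any degeneracy of the two pieces below is an issue.

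The main structural input is Denisov's decomposition of $B_L$ at its maximum: conditionally on $x_L=u$ and on the maximal value $m:=B_L(u)$, the two reflected halves
\[
\{B_L(u)-B_L(u-s)\}_{s\in[0,u]}\qquad\text{and}\qquad\{B_L(u)-B_L(u+s)\}_{s\in[0,L-u]}
\]
are independent, and each is a three-dimensional Bessel bridge from $0$ to $m$, over time-horizons $u$ and $L-u$ respectively. Conditional independence given $(u,m)$ is the Markov property of the bridge combined with the fact that ``the maximum over $[0,L]$ is attained at $u$ with value $m$'' decouples into the event $\{B_L\le m \text{ on }[0,u]\}$ and the event $\{B_L\le m\text{ on }[u,L]\}$; the conditioning on the null event $\{x_L=u\}$ is made precise through the joint density of $(x_L,m)$ in the usual way.

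It then remains to show that if $\ell\to\infty$ and $m_\ell$ is such that $m_\ell/\sqrt\ell$ stays in a compact subset of $(0,\infty)$, then a three-dimensional Bessel bridge from $0$ to $m_\ell$ over $[0,\ell]$, restricted to $[0,M]$, converges weakly on $C([0,M])$ to a Bessel-3 process started at $0$. This I would prove via absolute continuity: on $\mathcal F_M$ the Bessel bridge has density $p^{(3)}_{\ell-M}(X_M,m_\ell)/p^{(3)}_\ell(0,m_\ell)$ with respect to the Bessel-3 law started at $0$, where $p^{(3)}_t(a,b)=\tfrac{b}{a}\big(G_t(b-a)-G_t(b+a)\big)$ and $p^{(3)}_t(0,b)=\tfrac{2b^2}{t}G_t(b)$; a short computation with these Gaussian densities (writing $m_\ell=\theta\sqrt\ell$ and using $\sinh(X_M m_\ell/\ell)\sim X_M m_\ell/\ell$) shows this ratio tends to $1$, uniformly for $\theta$ and $X_M$ in compacts of $(0,\infty)$, which upgrades to weak convergence of the restriction. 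Since the rescaled maximum $m_L/\sqrt L$ is tight and a.s.\ in $(0,\infty)$ --- the maximum of the standard Brownian bridge has an explicit Rayleigh-type law --- applying this to both halves and using their conditional independence gives, after removing the conditioning (the limit is the same regardless of the limiting values of $x_L/L$ and $m_L/\sqrt L$), that $\big(\{B_L(x_L)-B_L(x_L+x)\}_{x\in[0,M]},\{B_L(x_L)-B_L(x_L-x)\}_{x\in[0,M]}\big)$ converges to a pair of independent Bessel-3 processes started at $0$, i.e.\ to $\{\B(x)\}_{x\in[-M,M]}$; reflecting yields the lemma.

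I expect the main obstacle to be this last step: controlling the three-dimensional Bessel bridge uniformly as both its length and its terminal value diverge at the diffusive scale, so that the restriction to the fixed window $[0,M]$ has the Bessel-3 process as its weak limit, together with the bookkeeping needed to pass from the conditional statement (given $x_L$ and $m$) to the unconditional one --- in particular, verifying that convergence is not destroyed on the small-probability events where $x_L$ is atypically close to $0$ or $L$, or $m$ atypically small, and justifying the Denisov decomposition in the precise conditional form stated above.
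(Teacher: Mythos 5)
Your argument is correct in outline, but it takes a genuinely different route from the paper. The paper's proof is a two-line application of Vervaat's transform: the law of $\{B_L(x_L+x)-B_L(x_L)\}$ equals that of $\{-|J_L(x)|\}$ for a periodized $3$-dimensional Brownian bridge $J_L$, and the explicit representation $J_L(x)=N(x)-\tfrac{x}{L}N(L)$ immediately gives local convergence of $J_L$ to a two-sided $3$-dimensional Brownian motion, whence $|J_L|\to\B$; no conditioning on null events and no density computations are needed. You instead invoke the Denisov/Williams-type decomposition of the bridge at its maximum into two conditionally independent Bessel-$3$ bridges from $0$ to $m$ of lengths $x_L$ and $L-x_L$, prove local convergence of a long Bessel-$3$ bridge to the Bessel-$3$ process via the Radon--Nikodym density $p^{(3)}_{\ell-M}(X_M,m_\ell)/p^{(3)}_\ell(0,m_\ell)$ on $\mathcal F_M$ (your Gaussian computation of this ratio is right, and with Scheff\'e it even gives total-variation convergence of the restrictions, uniformly over $m_\ell\le C\sqrt{\ell}$), and then de-condition using the tightness of $x_L/L$ and $m_L/\sqrt L$. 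What your route buys is explicit conditional independence of the two halves and quantitative (total-variation) control of the window $[-M,M]$; what it costs is the need to justify the decomposition at the a.s.-unique argmax --- a conditioning on a null event that the Vervaat route sidesteps entirely, and whose standard proofs themselves often pass through Vervaat --- plus the bookkeeping you correctly identify at the end. None of these obstacles is fatal, but they make the argument substantially longer than the paper's; if you carry it out, state the bridge-at-maximum decomposition as a citable lemma (e.g.\ Imhof or Williams) rather than re-deriving the null-event conditioning from scratch.
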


\begin{proof}
This is well-known, but for the convenience of the reader, we present a short proof here.
We consider a Brownian excursion $\{A_L(x)\}_{x\in [0,L]}$ on $[0,L]$. (Informally speaking this is a Brownian bridge on $[0,L]$, with $A_L(0)=A_L(L)=0$, conditioned to be positive on $(0,L)$.) We note (see e.g. \cite{yor-zambotti})  that there is a $3$-dimensional Brownian bridge $\{J_L(x)\}_{x\in [0,L]}$ such that $A_L(x) = |J_L(x)|$ for all $x\in [0,L]$. Now we extend $A_L$ and $J_L$ to be defined on all of $\R$ by periodicity. By the Vervaat transform \cite{vervaat}, the law of $\{B_L(x_L+x)-B_L(x_L)\}_{x\in [-L/2,L/2]}$ is the same as the law of $\{-|J_L(x)|\}_{x\in [-L/2,L/2]}$. Since the norm of a $3$-dimensional Brownian motion has the same law as a Bessel-$3$ process, it suffices to show that $\{J_L(x)\}_{x\in [-L/2,L/2]}$ converges in law to a two-sided $3$-dimensional Brownian motion as $L\to\infty$.

To prove this, we note that there is a $3$-dimensional standard Brownian motion $\{N(x)\}_{x\in [0,L]}$ (started at $N(0)=0$) such that $J_L(x) = N(x)-\frac{x}{L} N(L)$ for all $x\in [0,L]$. This also means that (as long as $L\ge 2M$), for $x\in [-M,0]$, we have \[J_L(x) = J_L(x+L) = N(x+L)-\frac{x+L}{L} N(L) = N(x+L)-N(L)-\frac{x}{L} N(L).\] Now, by the independence of the increments of Brownian motion, if we continue to assume that $L\ge 2M$ then the process given by
\[\tilde N(x) = \begin{cases}N(x)&x\ge 0,\\N(x+L)-N(L)&x<0\end{cases}\] agrees in law with a standard two-sided $3$-dimensional Brownian motion when restricted to $[-M,M]$. Also, the above two representations show that $J_L$ converges uniformly to $\tilde N$ in $C([-M,M])$ as $L\to\infty$, since $x\mapsto\frac xL N(L)$ converges to zero in probability in $C([0,M])$ as $L\to\infty$. This completes the proof.
\end{proof}


Now we present a proof of Proposition~\ref{p.conrho}.

\begin{proof}[Proof of Proposition~\ref{p.conrho}]
We start by observing that
\[
 \rho_L(x_L+x)= \frac{e^{B_L(x_L+x)}}{\int_0^L e^{B_L(x_L+x')}dx'} = \frac{e^{B_L(x_L+x)-B_L(x_L)}}{\int_0^L e^{B_L(x_L+x')-B_L(x_L)}dx'}.
\]
Fix $M>0$ and let $M'\ge M$. By Lemma~\ref{l.wkconBBB}, for each $L>0$ we have a standard two-sided Bessel-$3$ process $\B_L$, defined on the same probability space as $B_L$, such that
\begin{equation}\label{eq:coupling}
 \|B_L(\cdot+x_L)-B_L(x_L)+\B_L(\cdot)\|_{C([-M',M'])}\to 0
\end{equation}
in probability as $L\to\infty$. We note that, for $L\ge 2M'$, we have
\begin{align}\int_0^L e^{B_L(x_L+x')-B_L(x_L)}dx'&=\int_{x_L-M'}^{x_L+M'} e^{B_L(x_L+x')-B_L(x_L)}dx'\notag\\&\qquad+\int_{x_L+M'}^{x_L-M'+L} e^{B_L(x_L+x')-B_L(x_L)}dx'\notag\\&=: I_{1;L,M'}+I_{2;L,M'}.\label{eq:splitintegral}\end{align}
For the first integral, we see by \eqref{eq:coupling} that, for any fixed $M'$, we have
\begin{equation}\label{eq:firstintegral}\left|I_{1;L,M'} -\int_{-M'}^{M'}e^{-\B_L(x')}dx'\right|\to 0\end{equation} in probability as $L\to\infty$.
For the second integral, we write
\begin{equation}
 \EE [I_{2;L,M'}] = \int_{x_L+M'}^{x_L-M'+L}\EE e^{B_L(x_L+x')-B_L(x_L)}dx'.\label{eq:secondintegral}
\end{equation}
To estimate the expectation inside the integral, we use the explicit density for $B_L(x_L)-B_L(x_L+x')$, namely
\begin{align*}\PP(B_L(x_L)-B_L(x_L+x')\in dy)
= \frac{2y^2\exp(-\frac{y^2}{2\{x'\}_L(L-\{x'\}_L)})}{(2\pi \{x'\}_L^3(L-\{x'\}_L)^3)^{1/2}},\qquad y\ge 0,
\end{align*}
where for any $x\in\R $, we define the unique $\{x\}_L \in [0,L)$  such that $x-\{x\}_L\in L\Z$. (This is a consequence of \cite[(1.3)]{DIM77} and the fact that $B_L(x_L)-B_L(x_L+x')$ is a periodized Brownian excursion.)
This means that
\begin{align*}
 \EE& e^{B_L(x_L+x')-B_L(x_L)}\\ &= \frac{2}{(2\pi)^{1/2}   (\{x'\}_L(L-\{x'\}_L)      )^{3/2}}\int_0^\infty y^2\exp\left(-y-\frac{y^2}{\{x'\}_L(L-\{x\}_L)}\right)dy\\&\le\frac{2}{(2\pi)^{1/2}   (\{x'\}_L(L-\{x'\}_L)      )^{3/2}}\int_0^\infty y^2e^{-y}dy.
\end{align*}
Using this in \eqref{eq:secondintegral}, we see that there is a  constant $C<\infty$, independent of $L$, such that
\[
 \EE[I_{2;L,M'}]\le C(M')^{-1/2}.
\]
Thus by Markov's inequality, for any $\eps_1,\eps_2>0$, we can find $M'$ so large that \[\PP(I_{2;L,M'}>\eps_1/3)<\eps_2/3\] for any $L\ge 2M'$, and also (since the law of $e^{-\B_L(\cdot)}$ does not depend  on $L$ and is almost surely integrable over $\R$) large enough that \[\PP\left(\int_{\R\setminus[-M',M']}e^{-\B_L(x')}dx'>\eps_1/3\right)<\eps_2/3\] for all $L\ge 2M'$. But by \eqref{eq:firstintegral}, if $L$ is large enough (the required largeness depending possibly on $M'$), then \[\PP\left(\left|I_{1;L,M'}-\int_{-M'}^{M'}e^{-\B_L(x')}dx'\right|>\eps_1/3\right)<\eps_2/3.\] Combining the last three displays and recalling \eqref{eq:splitintegral}, we see that
\[\left|\int_0^L e^{B_L(x_L+x')-B_L(x_L)}dx'-\int_\R e^{-\B_L(x')}dx'\right|\to 0\] in probability as $L\to\infty$. Using this along with \eqref{eq:coupling}, we obtain the convergence \eqref{eq:conrho}.
%
%
%
%
%
%
%
\end{proof}

Using Proposition~\ref{p.conrho}, one can further show
\begin{proposition}\label{p.condensity}
For any $x\in\R$, we have $\bar{g}_\infty(x)<\infty$ and 
\begin{equation}\label{e.condensity}
\begin{aligned}
\bar{g}_\infty(x)&=\lim_{L\to\infty} \int_0^L \EE \rho_L(y+x_L)\rho_L(y+x_L+x)dy\\
&=\lim_{L\to\infty} \int_0^L \EE \rho_L(y)\rho_L(y+x)dy
\end{aligned}
\end{equation}
\end{proposition}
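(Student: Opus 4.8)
The plan is to prove the two displayed equalities in \eqref{e.condensity} by combining the weak convergence from Proposition~\ref{p.conrho} with a uniform tail bound on the mass of $\rho_L$ away from its mode, in the same spirit as the proof of Proposition~\ref{p.wkcon}. First I would note that the second equality in \eqref{e.condensity} is trivial by stationarity: the process $\rho_L(\cdot)$ is a periodic stationary random field (since the Brownian bridge extended periodically is stationary under shifts, as the argmax $x_L$ is uniformly distributed and independent of the recentered field), so shifting the dummy integration variable by $x_L$ changes nothing. So the content is the first equality.

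For the first equality, fix $x\in\R$ and write, using periodicity and the change of variables $z=y+x$,
\[
\int_0^L \EE\,\rho_L(y+x_L)\rho_L(y+x_L+x)\,dy = \EE\int_\R q_L(y,z)\,dy\,dz,
\]
where $q_L(y,z) = \rho_L(x_L+y)\rho_L(x_L+z)$ supported on a fundamental domain, interpreted via periodization exactly as in the proof of Proposition~\ref{p.wkcon} with $q_\infty(y,z)=\rho_\infty(y)\rho_\infty(z)$ playing the limiting role (note $\int_\R q_\infty\,dy\,dz$ over $|z-y|=\text{fixed}$ recovers $\bar g_\infty(x)$). The plan is then to split the integral over $D_{2K}=[-K,K]^2$ and its complement. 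On $D_{2K}$, Proposition~\ref{p.conrho} gives convergence of $\int_{D_{2K}} q_L \to \int_{D_{2K}} q_\infty$ because $\rho\mapsto \int_{D_{2K}}\rho(y)\rho(z)\,dy\,dz$ restricted to the relevant slice is a bounded continuous functional on $C([-2K,2K])$ (here one must be a little careful that the recentered field on $[-L/2,L/2]$ restricted to $[-2K,2K]$ is what converges, which is fine for $L\ge 4K$). For the tail, I need the analogue of the tail estimate \eqref{e.tail-esti}: that
\[
\limsup_{K\to\infty}\ \limsup_{L\to\infty}\ \PP\!\left(\int_{[-K,K]^c}\rho_L(x_L+s)\,ds > \eps\right) = 0.
\]
This is where the main work lies. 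I expect to obtain it from the explicit periodized-excursion density used in the proof of Proposition~\ref{p.conrho}: the bound $\EE\,e^{B_L(x_L+s)-B_L(x_L)} \le C(\{s\}_L(L-\{s\}_L))^{-3/2}$ gives $\EE\int_{K\le\{s\}_L\le L-K} e^{B_L(x_L+s)-B_L(x_L)}\,ds \le C K^{-1/2}$, while the denominator $\int_0^L e^{B_L(x_L+s')-B_L(x_L)}\,ds'$ is bounded below in probability (its liminf is at least $\int_{-M'}^{M'}e^{-\B_L(s')}ds'$, which converges to a strictly positive random variable). Combining a lower bound on the denominator holding with probability $\ge 1-\delta$ with Markov applied to the numerator yields the desired tail estimate.

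With the tail estimate in hand, the argument closes exactly as in Proposition~\ref{p.wkcon}: bound $|\int_\R q_L - \int_\R q_\infty|$ by (i) the $D_{2K}$ term, which vanishes as $L\to\infty$ for fixed $K$; (ii) $\int_{D_{2K}^c}\EE\,q_\infty$, which is small for $K$ large since $\bar g_\infty\in L^1$ and in fact $\int_\R\EE q_\infty = \|\bar g_\infty\|_{L^1}<\infty$ by \eqref{e.ginfinity}; (iii) the contribution of $\int_{D_{2K}^c} q_L$ on the bad event, bounded by $\PP(\text{bad event})\le\delta$; and (iv) its contribution on the good event, bounded by a constant times $\eps$ using that $\{(y,z)\in D_{2K}^c\}$ forces $|y|\ge K$ or $|z|\ge K$ (after accounting for the $|z-y|$ constraint, exactly as in the displayed computation in the proof of Proposition~\ref{p.wkcon}). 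Sending first $L\to\infty$, then $K\to\infty$, then $\eps,\delta\to 0$ gives the first equality in \eqref{e.condensity}; finiteness $\bar g_\infty(x)<\infty$ follows a fortiori, or directly from \eqref{e.ginfinity}. The only genuinely new ingredient relative to Section~2 is the uniform-in-$L$ tail bound for $\rho_L$, and that is the step I would expect to require the most care, though the explicit density computation already carried out in the proof of Proposition~\ref{p.conrho} makes it quite tractable.
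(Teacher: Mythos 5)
Your overall skeleton---weak convergence from Proposition~\ref{p.conrho} on a compact window plus a tail estimate for the mass of $\rho_L(x_L+\cdot)$ away from the origin, assembled as in Proposition~\ref{p.wkcon}---matches the paper's strategy, and your ingredients for the tail estimate (the bound $\EE\int_{K\le\{s\}_L\le L-K}e^{B_L(x_L+s)-B_L(x_L)}\,ds\le CK^{-1/2}$ from the periodized excursion density, and a lower bound in probability on the normalizing integral) are sensible. The second equality in \eqref{e.condensity} is indeed immediate, though for a simpler reason than you give: the integrand is pathwise $L$-periodic in $y$, so its integral over a full period is unchanged by the shift by $x_L$, realization by realization; no appeal to the distribution of the argmax is needed.

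There is, however, a genuine gap in your claim that the argument closes ``exactly as in Proposition~\ref{p.wkcon}.'' In that proof the key quantities are \emph{double} integrals of $\rho\otimes\rho$, hence bounded by $1$ because $\rho$ is a probability density; this is what makes the $D_{2K}$-functional bounded and makes the bad-event contribution $\le\PP(A_{t,\eps,K})$. Here, for fixed $x$, you are dealing with the \emph{line} integral $\int\hat\rho_L(y)\hat\rho_L(y+x)\,dy$ (writing $\hat\rho_L=\rho_L(x_L+\cdot)$; your $q_L$ lives on the slice $z=y+x$), and this is bounded only by $\|\hat\rho_L\|_{L^\infty}$, an unbounded random variable. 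Consequently: (i) the functional $\rho\mapsto\int_{-2K}^{2K}\rho(y)\rho(y+x)\,dy$ is continuous but \emph{not} bounded, so passing from weak convergence to convergence of expectations requires uniform integrability of $\|\hat\rho_L\|_{L^\infty}^2$ over $L$; (ii) your step (iii), bounding the bad-event contribution by $\PP(\text{bad event})$, is false as stated; and (iii) your step (iv) cannot invoke the Fubini-type splitting $\int_{|y|\ge K\text{ or }|z|\ge K}\rho(y)\rho(z)\,dy\,dz\le2\eps$, since there is no double integral---one needs $\int_{|y|\ge K}\hat\rho_L(y)\hat\rho_L(y+x)\,dy\le\|\hat\rho_L\|_{L^\infty}\int_{|y|\ge K}\hat\rho_L$. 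All three issues are resolved by one ingredient your proposal does not supply: a uniform-in-$L$ moment bound such as $\sup_{L\ge2}\EE\|\hat\rho_L\|_{L^\infty}^p<\infty$. The paper obtains exactly this by identifying $B_L(x_L)-B_L(x_L+\cdot)$ in law with a Bessel-$3$ bridge and using the time change \eqref{eq:time-change} to reduce to $\EE e^{p\max_{[0,2]}\B}<\infty$; it then proves the tail estimate directly in expectation, $\EE\int_{M\le|y|\le L/2}\hat\rho_L(y)\hat\rho_L(y+x)\,dy\le CM^{-1/2}$, using the pointwise bound $(\int_0^Le^{-\B_L^0})^{-2}\le e^{2\max_{[0,1]}\B_L^0}$ and conditioning on $\B(2)$, which bypasses your in-probability formulation altogether. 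Your proof can be repaired along your own lines once this $L^\infty$ control is added (note that your denominator lower bound already controls $\|\hat\rho_L\|_{L^\infty}$ on the good event, but the bad-event expectation still needs the moment bound), but as written the reduction to the Section~2 scheme does not go through.
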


\begin{proof}
First, we show $\bar{g}_\infty(x)<\infty$ for any $x\in \R$:
\[
\begin{aligned}
\bar{g}_\infty(x)=\EE \int_\R \rho_\infty(y)\rho_\infty(y+x)dy &\leq \EE \| \rho_\infty(\cdot)\|_{L^{\infty}(\R)}\\
& =\EE   \frac{1}{\int_\R e^{-\B(x')}dx'} \leq  \EE e^{\sup_{x\in[0,1]} |\B(x)|}<\infty.
\end{aligned}
\]

To simplify the notation, we let ${\hat{\rho}_L (\cdot)= \rho_L(\cdot + x_L)}$. Fix $x\in \R$ and $M > 0$, we first show that 
\begin{equation}
\label{eq:truncation}
 \lim_{L\to\infty}\EE \int_{-M}^M \hat{\rho}_L(y)\hat{\rho}_L(y+x)dy = \EE \int_{-M}^M \rho_{\infty}(y) \rho_{\infty}(y+x) \, dy .
\end{equation}
For each $K >0$, 
the functional $\rho\mapsto  F_K[\rho]$ given by
\begin{equation*}
F_K[\rho] = \int_{-M}^M (\chi_K \circ \rho)(y) (\chi_K \circ \rho)(x+y)\, dy, \quad \chi_K(p) = \max(p,K),
\end{equation*}
is a bounded continuous functional on $C(\R)$ with the uniform-on-compact topology, so 
by Proposition~\ref{p.conrho},  we have 
\[
\lim\limits_{L \to \infty}\EE F_K [\hat{\rho}_L] = \EE F_K[\rho_{\infty}],
\] 
To finish the proof of (\ref{eq:truncation}), it is enough to show that $\{ \|\hat{\rho}_L\|_{L^{\infty}}^2 \}$ is uniformly integrable, since
\begin{equation*}
\EE \Big( F_{\infty}[\rho] - F_K[\rho]  \Big) \le \EE \ONE_{ \{ \| \rho \|_{L^{\infty}(\R)} \ge K \}} F_{\infty}[\rho] 
\leq 2M\EE  \ONE_{\{\| \rho \|_{L^{\infty}(\R)} \ge K\}} \| \rho\|^2_{L^{\infty}(\R)}.
\end{equation*}
In fact, $\{ B_{L}(x_L) - B_L(x_L+\cdot) \}_{x \in [0,L]}$ has the same distribution as
$\B^0_L$, the Bessel-$3$ bridge over $[0,L]$, which can then be obtained from the standard Bessel-$3$ process via the time change
\begin{equation}
  \label{eq:time-change}
  \B_L^0(y)\overset{\mathrm{law}}{=} \Big( 1-\frac{y}{L} \Big)\B
\Big( \frac{y}{1-y/L} \Big).
\end{equation}
Using $\B_L^0$, we have 
\[
\hat{\rho}_L(y)=\frac{e^{B_L(x_L+y)-B_L(x_L)}}{\int_0^L e^{B_L(x_L+y')-B_L(x_L)}dy'} \overset{\mathrm{law}}{=}\frac{e^{-\B_L^0(y)}}{\int_0^L e^{-\B_L^0(y')}dy'},  
\]
 hence, for $L \ge 2$ and $p \ge 0$,
\begin{align*}
  \EE \|\hat{\rho}_L\|_{L^{\infty}}^p \le \EE e^{p \max\limits_{[0,1]} \B_L^0(y)} = \EE e^{p \max\limits_{[0,1]} (1-y/L)\B(y/(1-y/L))}
  \le \EE e^{p \max\limits_{[0,2]}\B(y) } < \infty.
\end{align*}
This established (\ref{eq:truncation}).

It remains to show that 
\begin{equation*}
\adjustlimits\limsup_{M \to \infty} \limsup_{L \to \infty} \EE \int_{M \le |y| \le L/2} \hat{\rho}_L(y) \hat{\rho}_L(y+x) \, dy = 0.
\end{equation*}
We will only estimate the integral over $M \le y \le L/2$.  The other part is similar.
Using the time change (\ref{eq:time-change}) again, we have
\begin{equation}
\label{eq:estimate-1}
\begin{split}
&\EE \int_{M \le y \le L/2} \hat{\rho}_L(y) \hat{\rho}_L(y+x) \, dy \\
  &\qquad= \EE ( \int_0^L e^{- \B_L^0(y) } \, dy )^{-2} \int_{M \le y \le L/2} e^{- \B_L^0(y) - \B_L^0 (y+x)} \, dy  \\
&\qquad\le \EE e^{2 \max\limits_{0 \le y \le 1} \B_L^0(y)}  \int_{M \le y \le L/2} e^{- \B_L^0(y) } \, dy \\
&\qquad\le \EE e^{2 \max\limits_{0 \le y \le 2} \B(y)} \int_{M \le y \le L/2} e^{- \Big( 1-\frac{y}{L} \Big)\B \Big( \frac{y}{1-y/L}  \Big)} \, dy \\
  &\qquad= \EE e^{2 \max\limits_{0 \le y \le 2} \B(y)} \int_{\frac{LM}{L-M}}^L \frac{L^2}{(t+L)^2}e^{- \frac{L}{t+L}\B (t)} \, dt \\
  &\qquad\le  \EE e^{2 \max\limits_{0 \le y \le 2} \B(y)} \int_{M}^{+\infty} e^{- \frac{1}{2}\B (t)} \, dt,
\end{split}
\end{equation}
where we use the change of variables $y = \frac{tL}{t+L}$ in the penultimate line.

We recall that $\B$ is a Markov process with the transition kernel given by
\begin{equation*}
  p(0,0; t, u) = 2 \frac{u^2}{t} G_t(u), \quad
  p(s,u;t,v ) = \frac{v}{u} \big( G_{t-s}( v-u) - G_{t-s}(v+u)  \big),
\end{equation*}
where $G_t(x)=(2\pi t)^{-1/2}\exp(-x^2/(2t))$.
By writing the Bessel-$3$ process as the modulus of the standard 3D Brownian motion, it is not hard to see that for some constant $K>0$, 
\[
\EE[e^{2 \max_{0 \le y \le 2} \B(y)} | \B(2) = u ] \le K
e^{Ku}.
\]
After conditioning on the value of $\B(2)$ and using the above estimate, the last line of~(\ref{eq:estimate-1}) can be bounded by (up to some multiplicative constant)
\begin{equation}
\label{eq:1}
\begin{split}
& \int_M^{\infty} dt \int_0^{\infty} \, du \int_0^{\infty}  dv \, ue^{-\frac{u^2}{4}}e^{Ku} \frac{ve^{-\frac{v}{2}}}{\sqrt{t-2}}e^{-\frac{u^2+v^2}{2(t-2)}} \Big(
  e^{\frac{2uv}{2(t-2)}} - e^{-\frac{2uv}{2(t-2)}} \Big) \\
&\quad\le C \int_M^{\infty} dt \, \frac{1}{\sqrt{t}}\int_0^{\infty} ve^{-\frac{v}{2}-\frac{v^2}{2(t-2)}} dv \int_0^{\infty} du \, u e^{-\frac{u^2}{8} - \frac{u^2}{2(t-2)} } \Big(
  e^{\frac{2uv}{2(t-2)}} - e^{-\frac{2uv}{2(t-2)}} \Big).
\end{split}
\end{equation}
A direct computation gives 
\begin{align*}
  \int_0^{\infty} ue^{-au^2} (e^{bu} - e^{-bu}) du &=e^{\frac{b^2}{4a}}
  \Big[  \int_0^{\infty} u e^{-a(u-b/2a)^2} \, du - \int_0^{\infty} u e^{-a(u+b/2a)^2} \, du \Big]\\
&  \leq \frac{b}{a} e^{\frac{b^2}{4a}} \int_{-\infty}^{\infty} e^{-au^2}\, du = \frac{\sqrt{\pi}b}{a^{3/2}} e^{\frac{b^2}{4a}}.
\end{align*}
We apply the above computation to (\ref{eq:1}) with $a = \frac{1}{8} + \frac{1}{2(t-2)} = \frac{t+2}{8(t-2)}$, $b = \frac{v}{t-2}$.
Then the integral in $u$ in (\ref{eq:1}) is bounded by $Cvt^{-1}e^{Cv^2t^{-2}}$ for large $M$, and hence the last line is bounded by $CM^{-1/2}$.
This completes the proof.
\end{proof}

\subsection{Yor's formula and exponential functional of Brownian bridge}

In this section, we apply Yor's formula to compute $\EE \rho_L(0)\rho_L(x)$ explicitly. Let us introduce some notations first.  
 Following \cite[Section 6]{yor1992some}, define 
\begin{equation}\label{e.deftheta}
\theta_r(t):=\frac{r}{(2\pi^3 t)^{1/2}} e^{\frac{\pi^2}{2t}}\int_0^\infty e^{-\frac{w^2}{2t}}e^{-r\cosh w} \sinh w \sin (\tfrac{\pi w}{t}) dw, \quad\quad r>0, t>0,
\end{equation}
and
\begin{equation}\label{e.defa}
a_t(y;x):=\sqrt{2\pi t} e^{\frac{x^2}{2t}} y^{-1} e^{-\frac{1}{2y}(1+e^{2x})} \theta_{e^x/y}(t), \quad\quad x\in\R, y>0.
\end{equation}
Let $\{W_t\}_{t\geq0}$ be a standard Brownian motion, then by \cite[Proposition 2]{yor1992some}, $a_t(\cdot\,;\,x)$ is the density of the random variable $\int_0^t e^{2W_s}ds$ conditioning on $W_t=x$. We have the following technical lemma on the asymptotics of $a$:
\begin{lemma}\label{l.a}
For any $x\in\R,y>0$, we have as $t\to\infty$
\begin{equation}\label{e.limita}
ta_t(y;x)\to \mathsf{h}(y,x),
\end{equation}
with 
\begin{equation}\label{e.defh}
\mathsf{h}(y,x):=y^{-2}e^{x-\frac{1}{2y}(1+e^{2x})}\int_0^\infty w e^{-y^{-1}e^x\cosh w} \sinh w dw.
\end{equation}
There exists $C>0$ such that for all $t\geq 1, x\in \R, y>0$, we have 
\begin{equation}\label{e.bounda}
  ta_t(y;x)\leq  Ce^{\frac{x^2}{2t}} y^{-2} e^{x-\frac{1}{2y}(1+e^{2x})}[1+ye^{-x}|x-\log y|].
  \end{equation}
\end{lemma}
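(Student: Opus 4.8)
The starting point is the explicit formula \eqref{e.defa}, which expresses $t a_t(y;x)$ in terms of the function $t\theta_r(t)$ with $r = e^x/y$. Since the factors $\sqrt{2\pi t}\, e^{x^2/(2t)} y^{-1} e^{-\frac{1}{2y}(1+e^{2x})}$ in \eqref{e.defa} are elementary, everything reduces to understanding the asymptotics and bounds for $t\theta_r(t)$ as $t\to\infty$. From \eqref{e.deftheta},
\[
t\theta_r(t) = \frac{rt}{(2\pi^3 t)^{1/2}} e^{\frac{\pi^2}{2t}}\int_0^\infty e^{-\frac{w^2}{2t}} e^{-r\cosh w}\sinh w \,\sin\!\Big(\frac{\pi w}{t}\Big)\, dw.
\]
The plan is to perform the change of variables $w = t v$ inside the integral (or, equivalently, to Taylor-expand the slowly varying factors). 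As $t\to\infty$, we have $e^{-w^2/(2t)}\to 1$, $e^{\pi^2/(2t)}\to 1$, and $\sin(\pi w/t) \sim \pi w/t$ pointwise in $w$; the prefactor $rt/(2\pi^3 t)^{1/2}$ combined with the $\pi/t$ from the sine produces the clean constant in the limiting integral. One thus expects
\[
t\theta_r(t) \longrightarrow \frac{r}{\sqrt{2\pi^3}}\cdot \frac{\pi}{1}\cdot \frac{1}{\sqrt{\cdot}}\int_0^\infty w\, e^{-r\cosh w}\sinh w\, dw,
\]
and after matching constants this yields $t a_t(y;x)\to \mathsf{h}(y,x)$ with $\mathsf{h}$ as in \eqref{e.defh} (the factor $y^{-2} e^x$ in front of the integral in \eqref{e.defh} coming from $r = e^x/y$ times the $y^{-1}$ in \eqref{e.defa}). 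To make the convergence rigorous one invokes dominated convergence: the integrand is bounded by $w\, e^{-r\cosh w}\sinh w$ uniformly in $t\ge 1$ (since $|\sin(\pi w/t)|\le \pi w/t$ and $e^{-w^2/(2t)}\le 1$), and this dominating function is integrable because $\cosh w$ grows exponentially.

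For the uniform bound \eqref{e.bounda}, the key estimates are $|\sin(\pi w/t)| \le \min(1, \pi w/t) \le \pi w/t$ and $e^{-w^2/(2t)}\le 1$ and $e^{\pi^2/(2t)} \le e^{\pi^2/2}$ for $t\ge 1$, which give
\[
t\theta_r(t) \le C\, r \int_0^\infty w\, e^{-r\cosh w}\sinh w\, dw.
\]
It then remains to bound $\int_0^\infty w\, e^{-r\cosh w}\sinh w\, dw$ by something of the form $C\, r^{-1}(1 + r^{-1}|\log r|)$ or similar, uniformly in $r>0$; substituting $u = \cosh w$ (so $du = \sinh w\, dw$ and $w = \operatorname{arccosh} u = \log(u + \sqrt{u^2-1})$) reduces this to $\int_1^\infty \log(u+\sqrt{u^2-1})\, e^{-ru}\, du$, which one splits at $u = 2$: near $u=1$ the logarithm is $O(\sqrt{u-1})$ and contributes $O(r^{-3/2})$, while for $u\ge 2$ one uses $\log(u+\sqrt{u^2-1}) \le C\log u$ and bounds $\int_2^\infty \log u\, e^{-ru}\, du$ by splitting according to whether $r\ge 1$ or $r<1$ (for small $r$ the logarithmic growth interacts with the exponential tail to produce the $|\log r|$ term). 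Tracking the substitution $r = e^x/y$ back through \eqref{e.defa}, the factor $r^{-1} = y e^{-x}$ and the term $|\log r| = |x - \log y|$ emerge exactly as in \eqref{e.bounda}, while the surviving $e^{x^2/(2t)}$ is simply carried along from the prefactor in \eqref{e.defa} (it does not need to be absorbed since $t\ge 1$ makes $\sqrt{2\pi t}/t = \sqrt{2\pi/t}\cdot\sqrt{1/t}$... — more precisely $\sqrt{2\pi t}\cdot t^{-1}\cdot t = \sqrt{2\pi t}$, and one checks the powers of $t$ cancel correctly against the $t$ from $t\theta_r$).

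The main obstacle is the bookkeeping in \eqref{e.bounda}: one must obtain a bound on the integral $\int_0^\infty w\, e^{-r\cosh w}\sinh w\, dw$ that is \emph{uniform over all $r>0$}, capturing both the $r\to 0$ behavior (where the integral diverges logarithmically, producing the $|x-\log y|$ factor) and the $r\to\infty$ behavior (where it decays), and then verifying that after multiplying by the elementary prefactors in \eqref{e.defa} one lands exactly on the claimed expression with the correct powers of $y$ and $e^x$. The convergence statement \eqref{e.limita} is comparatively routine once the dominating function is identified. I would also double-check the constant in $\mathsf{h}$ by carefully matching $\sqrt{2\pi t}\cdot r/\sqrt{2\pi^3 t}\cdot (\pi/t)\cdot t = r/\sqrt{2\pi^3 t}\cdot \sqrt{2\pi t}\cdot \pi$, confirming all the $\pi$'s and $t$'s cancel to leave the clean formula \eqref{e.defh}.
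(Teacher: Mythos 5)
Your proposal follows essentially the same route as the paper: for \eqref{e.limita}, dominated convergence using $\sin s/s\to 1$ and $|\sin s|\le|s|$ with dominating function $w e^{-r\cosh w}\sinh w$; for \eqref{e.bounda}, the bound $|\sin(\pi w/t)|\le \pi w/t$ followed by an estimate of $\int_0^\infty w e^{-r\cosh w}\sinh w\,dw$ by $C[1+r^{-1}|\log r|]$ via a change of variables (the paper uses $\cosh w\ge e^w/2$ and $x=e^w$ rather than your $u=\cosh w$, but this is cosmetic). One small correction: your claim that the piece near $u=1$ "contributes $O(r^{-3/2})$" is a valid upper bound but is \emph{not} dominated by $C[1+r^{-1}|\log r|]$ as $r\to 0$ (since $r^{-1/2}/|\log r|\to\infty$); for small $r$ you should instead use the trivial bound $\int_1^2\sqrt{u-1}\,e^{-ru}\,du\le \tfrac23$, which is absorbed into the constant term, reserving the $r^{-3/2}$ (in fact $e^{-r}r^{-3/2}$) estimate for large $r$.
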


\begin{proof}
To derive \eqref{e.limita}, it suffices to use the fact that $\frac{\sin x}{x}\to1$ as $x\to0$ and $|\sin x|\leq |x|$. To show \eqref{e.bounda}, we have for any $r>0$ that 
\[
\begin{aligned}
&\int_0^\infty e^{-r\cosh w} \sinh w |\sin (\tfrac{\pi w}{t})| dw \leq Ct^{-1}\int_0^\infty we^{-r\cosh w} \sinh w dw\\
&\leq Ct^{-1}\int_0^\infty we^{-\frac{r}{2}e^w} e^w dw= Ct^{-1} \int_1^\infty (\log x) e^{-\frac{r}{2} x} dx\\
&=2Ct^{-1}r^{-1}\int_{r/2}^\infty (\log x-\log \tfrac{r}{2}) e^{-x}dx \leq Ct^{-1}[1+r^{-1}|\log r|].
\end{aligned}
\]
The proof is complete.
\end{proof}

For any $\lambda>0$ and $x\in\R$, define $\mathsf{f}_\lambda(\cdot\,;\,x)$ as the density of the random variable $\int_0^1 e^{\lambda W_s}ds$ conditional on $W_1=x$. Through a change of variable and using the scaling property of the Brownian bridge, we have
\begin{equation}\label{e.defflambda}
\mathsf{f}_\lambda(y;x)=\frac{\lambda^2}{4}a_{\frac{\lambda^2}{4}}(\frac{\lambda^2 y}{4};\frac{\lambda x}{2}).
\end{equation}
\begin{lemma}\label{l.yor}
For any $L>0$ and $x\in(0,L)$, we have 
\[
\begin{aligned}
\EE \rho_L(0)\rho_L(x)=\frac{1}{16} \int_{\R}\int_{\R_+^2} &e^{ y}G_{x(1-x/L)}(y)(z_1+z_2)^{-2}\\
&\times a_{\frac{x}{4}}(\frac{z_1}{4}; \frac{y}{2}) a_{\frac{L-x}{4}}(\frac{z_2}{4}; \frac{y}{2}) dydz_1dz_2,
\end{aligned}
\]
\end{lemma}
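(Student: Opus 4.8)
The plan is to compute $\EE\rho_L(0)\rho_L(x)$ by decomposing the Brownian bridge $B_L$ over the interval $[0,L]$ into two independent pieces at the intermediate point $x$, and then applying Yor's density formula (via the function $a_t$) on each piece.

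\textbf{Step 1: Decompose the bridge at the point $x$.} Condition on the value $B_L(x) = y$. Given this, $B_L(x)-B_L(0) = y$ has density $G_{x(1-x/L)}(y)$ coming from the Brownian bridge marginal: since $B_L$ is a Brownian bridge from $(0,0)$ to $(L,0)$, the value at an interior time $x$ is Gaussian with mean $0$ and variance $x(L-x)/L = x(1-x/L)$. Conditionally on $B_L(x)=y$, the two restrictions $\{B_L(s)\}_{s\in[0,x]}$ and $\{B_L(s)\}_{s\in[x,L]}$ are independent; the first is a Brownian bridge from $(0,0)$ to $(x,y)$, and the second is a Brownian bridge from $(x,y)$ to $(L,0)$.

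\textbf{Step 2: Reduce each piece to a conditioned exponential functional of Brownian motion.} For a Brownian bridge from $(0,0)$ to $(x,y)$, write $e^{B_L(s)}$ and integrate over $[0,x]$ to get $\int_0^x e^{B_L(s)}\,ds$. By time-reversal/scaling, a Brownian bridge from $(0,0)$ to $(x,y)$ over $[0,x]$ is, after the substitution $s = x\cdot u$ and using $B_L(xu) \overset{\mathrm{law}}{=} \sqrt{x}\,\widetilde W_u$ with $\widetilde W$ a standard Brownian motion conditioned on $\widetilde W_1 = y/\sqrt x$, related to $\int_0^1 e^{\sqrt x\, \widetilde W_u}\,du$. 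This is exactly the functional whose conditional density is $\mathsf{f}_{\sqrt x}(\cdot\,;\,y/\sqrt x)$, and by \eqref{e.defflambda} this equals $\tfrac{x}{4}a_{x/4}(\tfrac{x}{4}\,\cdot\,;\tfrac{\sqrt x}{2}\cdot\tfrac{y}{\sqrt x}) = \tfrac x4 a_{x/4}(\tfrac x4\,\cdot\,;\tfrac y2)$. So if $Z_1 = \int_0^x e^{B_L(s)}\,ds$ (conditioned on $B_L(x)=y$, for the left piece) then, after the appropriate change of variables $z_1 = 4Z_1$ absorbing the scaling, its density is $\tfrac14 a_{x/4}(\tfrac{z_1}{4};\tfrac y2)$. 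Do the same for the right piece over $[x,L]$, which is a bridge of length $L-x$ from level $y$ down to level $0$; by time reversal this is a bridge from $0$ to $y$ over $[0,L-x]$, and the same computation gives that $Z_2 = \int_x^L e^{B_L(s)-B_L(x)}\cdot e^{B_L(x)}\,ds$ — more precisely the analogous integral — has conditional density $\tfrac14 a_{(L-x)/4}(\tfrac{z_2}{4};\tfrac y2)$. One must be careful to track the factor $e^{B_L(x)} = e^y$ shared between the two pieces: the total normalizer is $\int_0^L e^{B_L(s)}\,ds = e^y(Z_1' + Z_2')$ for the appropriately normalized pieces $Z_i'$, and $\rho_L(0) = e^{B_L(0)}/(\cdots) = 1/(e^y(Z_1'+Z_2'))$ while $\rho_L(x) = e^y/(e^y(Z_1'+Z_2')) = 1/(Z_1'+Z_2')$, so $\rho_L(0)\rho_L(x) = e^{-y}(Z_1'+Z_2')^{-2}$. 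Wait — one should double-check the sign of the exponent of $e^y$; the claimed formula has $e^{+y}G_{x(1-x/L)}(y)$, so presumably after writing everything in terms of the variables $z_1, z_2$ that include the $e^y$ factors appropriately, the net weight is $e^y$.

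\textbf{Step 3: Assemble.} Integrate over $y \in \R$ against the bridge marginal density $G_{x(1-x/L)}(y)$, over $z_1, z_2 \in \R_+$ against the two conditional densities $\tfrac14 a_{x/4}(\tfrac{z_1}{4};\tfrac y2)$ and $\tfrac14 a_{(L-x)/4}(\tfrac{z_2}{4};\tfrac y2)$, with integrand $e^{y}(z_1+z_2)^{-2}$ (the product $\rho_L(0)\rho_L(x)$ re-expressed in these variables). The two factors of $\tfrac14$ combine with a further factor to give the stated $\tfrac{1}{16}$. The independence from Step 1 is what makes the joint density of $(y, z_1, z_2)$ factor as the product, justifying the triple integral.

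\textbf{Main obstacle.} The delicate point is the bookkeeping of the $e^{B_L(x)}=e^y$ factors and the scaling constants when passing from $\int_0^x e^{B_L(s)}\,ds$ to the Yor functional $\int_0^1 e^{\lambda W_u}\,du$: one must correctly apply Brownian-bridge scaling $B_L(xu)\overset{\mathrm{law}}{=}\sqrt x\,W^{\mathrm{br}}_u$ (conditioning the endpoint appropriately), track how the Jacobian of $z_i = c\int e^{\cdots}$ interacts with \eqref{e.defflambda}, and verify that the final weight is exactly $e^{y}(z_1+z_2)^{-2}$ and the constant is $\tfrac1{16}$. I would verify the constant by a sanity check: integrating the claimed formula over all $x$ (or against a test function) should be consistent with $\EE\|\rho_L\|$-type bounds, or simply by carefully redoing the change of variables $z_i \leftrightarrow \int_0^{\cdot} e^{B_L}$. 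The probabilistic input (independence of the two halves of a bridge given the midpoint value, and Yor's \cite[Proposition 2]{yor1992some}) is standard; the work is entirely in the change of variables.
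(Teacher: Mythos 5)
Your proposal is correct and follows essentially the same route as the paper: condition on $B_L(x)=y$ (Gaussian with variance $x(1-x/L)$), use the conditional independence of the two halves of the bridge given this value, and apply Yor's formula after Brownian scaling on each half; the paper merely rescales the whole bridge to $[0,1]$ before conditioning rather than rescaling each piece afterwards. The $e^{y}$ sign issue you flag resolves itself if you do \emph{not} normalize the pieces by $e^{B_L(x)}$: taking $z_1=\int_0^x e^{B_L(s)}\,ds$ and $z_2=\int_x^L e^{B_L(s)}\,ds$ as they stand (each an exponential functional of a bridge from $0$ to $y$, the second after time reversal), one has $\rho_L(0)\rho_L(x)=e^{B_L(0)+B_L(x)}\,(z_1+z_2)^{-2}=e^{+y}(z_1+z_2)^{-2}$, and each $z_i$ has conditional density exactly $\tfrac14\,a_{\cdot/4}(z_i/4;\,y/2)$ (no further change of variables needed), which produces the factor $\tfrac1{16}$.
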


\begin{proof}
Recall that $\rho_L(x)=e^{B_L(x)}/\int_0^L e^{B_L(x')}dx'$ with $B_L$ a standard Brownian bridge connecting $(0,0)$ and $(L,0)$. Fix any $x\in(0,L)$, we have 
\[
\begin{aligned}
\EE \rho_L(0)\rho_L(x)&=\EE e^{B_L(x)}(\int_0^L e^{B_L(x')}dx')^{-2}\\
&=L^{-2} \EE e^{\lambda B_1(\tilde{x})}(\int_0^1 e^{\lambda B_1(x')}dx')^{-2},
\end{aligned}
\]
where $\lambda=\sqrt{L}$, $\tilde{x}=x/L\in(0,1)$ and in the last ``='' we used the scaling property of Brownian bridge. Further conditioning on $B_1(\tilde{x})=y$, we write the above expression as 
\[
\EE \rho_L(0)\rho_L(x)=L^{-2} \int_\R e^{\lambda y}G_{\tilde{x}(1-\tilde{x})}(y)\EE[ (\int_0^1 e^{\lambda B_1(x')}dx')^{-2}\,|\, B_1(\tilde{x})=y]dy,
\]
since $G_{\tilde{x}(1-\tilde{x})}(\cdot)$ is the density of $B_1(\tilde{x})$. We decompose the integral into two parts:
\[
\int_0^1 e^{\lambda B_1(x')}dx'=\left(\int_0^{\tilde{x}}+\int_{\tilde{x}}^1\right) e^{\lambda B_1(x')}dx'=:I_1+I_2.
\]
Conditional on $B_1(\tilde{x})=y$, $I_1$ and $I_2$  are independent, and from the definition of $\mathsf{f}_\lambda$ and a change of variable, we know that their densities are given by
\[
\frac{1}{\tilde{x}}\mathsf{f}_{\lambda \sqrt{\tilde{x}}}(\frac{\cdot}{\tilde{x}}; \frac{y}{\sqrt{\tilde{x}}})\quad\mbox{ and } \quad 
\frac{1}{1-\tilde{x}}\mathsf{f}_{\lambda \sqrt{1-\tilde{x}}}(\frac{\cdot}{1-\tilde{x}}; \frac{y}{\sqrt{1-\tilde{x}}})
\]
respectively. Therefore, we have 
\[
\begin{aligned}
\EE \rho_L(0)\rho_L(x)=L^{-2} \int_{\R}\int_{\R_+^2} &e^{\lambda y}G_{\tilde{x}(1-\tilde{x})}(y)(z_1+z_2)^{-2}\frac{1}{\tilde{x}(1-\tilde{x})}\\
&\times \mathsf{f}_{\lambda \sqrt{\tilde{x}}}(\frac{z_1}{\tilde{x}}; \frac{y}{\sqrt{\tilde{x}}}) \mathsf{f}_{\lambda \sqrt{1-\tilde{x}}}(\frac{z_2}{1-\tilde{x}}; \frac{y}{\sqrt{1-\tilde{x}}}) dydz_1dz_2,
\end{aligned}
\]
which completes the proof after the change of variable $y\mapsto y/\lambda,z_i\mapsto z_i/L, i=1,2$.
\end{proof}

\subsection{Proof of Theorem~\ref{t.mainth}}

By Proposition~\ref{p.condensity}, we have 
\[
\bar{g}_\infty(x)=\lim_{L\to\infty} \int_0^L \EE \rho_L(y)\rho_L(y+x)dy
\]
Recall that  
\[
\rho_L(x)=\frac{e^{B_L(x)}}{\int_0^L e^{B_L(x')}dx'}
\] 
is periodically extended to $\R$,  so for any $y\in\R$, we have 
\[
\rho_L(y+x)=\frac{e^{B_L(y+x)-B_L(y)}}{\int_0^{L} e^{B_L(y+x')-B_L(y)}dx'}.
\]
By a property of Brownian bridge, we know that $\{B_L(y+x)-B_L(y)\}_{x\in[0,L]}$ is again a Brownian bridge connecting $(0,0)$ and $(L,0)$. This implies that $\rho_L(\cdot)$ is a stationary process, which leads to
\[
\int_0^L \EE \rho_L(y)\rho_L(y+x)dy=
L\EE \rho_L(0)\rho_L(x).
\] Applying the following lemma, we conclude the proof of the main theorem.

\begin{lemma}
For any $x>0$, we have 
\begin{equation}\label{e.exginfinity}
\bar{g}_\infty(x)= \frac12\int_{\R}\int_{\R_+^2} e^{2 y}G_{x}(2y)(z_1+z_2)^{-2}
 a_{\frac{x}{4}}(z_1;y)\mathsf{h}(z_2,y)dydz_1dz_2.
\end{equation}
where $a,\mathsf{h}$ were defined in \eqref{e.defa} and \eqref{e.defh} respectively.
As $x\to\infty$, we have $x^{\frac{3}{2}} \bar{g}_\infty(x)\to \lambda$ with 
\begin{equation}\label{e.asyginfinity}
\lambda:= \sqrt{\frac{2}{\pi}} \int_{\R}\int_{\R_+^2} e^{ 2y} (z_1+z_2)^{-2}\mathsf{h}(z_1,y) \mathsf{h}(z_2,y)dydz_1dz_2\in(0,\infty). 
\end{equation}
\end{lemma}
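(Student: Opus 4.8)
The plan is to obtain \eqref{e.exginfinity} by taking the $L\to\infty$ limit in the formula of Lemma~\ref{l.yor} for $L\,\EE\rho_L(0)\rho_L(x)$, and then to extract the tail asymptotics \eqref{e.asyginfinity} from the resulting expression. For the first part, start from
\[
L\,\EE\rho_L(0)\rho_L(x)=\frac{L}{16}\int_\R\int_{\R_+^2}e^{y}G_{x(1-x/L)}(y)(z_1+z_2)^{-2}\,a_{\frac x4}\!\big(\tfrac{z_1}4;\tfrac y2\big)\,a_{\frac{L-x}4}\!\big(\tfrac{z_2}4;\tfrac y2\big)\,dy\,dz_1\,dz_2,
\]
and substitute $y\mapsto 2y$, $z_1\mapsto 4z_1$, $z_2\mapsto 4z_2$ to line up the arguments with the statement, noting $G_{x(1-x/L)}(2y)\to G_x(2y)$ and $e^{2y}$ appearing after the scaling. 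The key input is Lemma~\ref{l.a}: the factor $a_{\frac{L-x}4}(z_2;y)$ has an extra $L$ in its subscript, so $\frac{L-x}{4}\,a_{\frac{L-x}4}(z_2;y)\to\mathsf h(z_2,y)$, and the prefactor $L$ is exactly what is needed to turn $a_{\frac{L-x}4}$ into $\frac{4}{L-x}\cdot\frac{L-x}{4}a_{\frac{L-x}{4}}\to\frac4L\cdot\text{(bounded)}$, with the surviving $\frac{L}{L}\to1$ producing $\mathsf h(z_2,y)$ and the constant $\frac1{16}\cdot 4\cdot\frac{(4)^3}{(4)^2}$ (from the Jacobians of the $z_i$ and $y$ substitutions) collapsing to $\tfrac12$. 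I would do this bookkeeping carefully but it is routine. To pass the limit inside the integral I would invoke dominated convergence, using the uniform bound \eqref{e.bounda} for $a_{\frac{L-x}4}$ (which gives an $L$-independent integrable majorant once multiplied by the $e^y G$ factor, since the $e^{x^2/(2t)}$ term with $t=\frac{L-x}{4}$ tends to $1$ and is bounded for $L$ large) together with the fact that $x$ is fixed so $a_{x/4}$ does not move; the polynomial-in-$|y-\log z_2|$ correction in \eqref{e.bounda} is harmless against the Gaussian $G_x(2y)$ and the exponential $e^{-z_2^{-1}e^y(1+e^{2y})/\cdots}$ decay in $z_2$.

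For the tail \eqref{e.asyginfinity}, the heuristic from the paper is that the $x^{-3/2}$ comes from the Bessel-3 return probability, which is encoded in the short-``time'' behavior built into $a_{x/4}$; but here $x\to\infty$ makes $\frac x4$ large, so I would instead use Lemma~\ref{l.a} a \emph{second} time: as $x\to\infty$, $\frac x4\,a_{\frac x4}(z_1;y)\to\mathsf h(z_1,y)$. Thus I expect
\[
x^{3/2}\bar g_\infty(x)=x^{3/2}\cdot\frac12\int e^{2y}G_x(2y)(z_1+z_2)^{-2}a_{\frac x4}(z_1;y)\mathsf h(z_2,y)\,dy\,dz_1\,dz_2,
\]
and $x^{3/2}G_x(2y)=x^{3/2}(2\pi x)^{-1/2}e^{-2y^2/x}=(2\pi)^{-1/2}x\,e^{-2y^2/x}$, while $a_{x/4}(z_1;y)\sim\frac4x\mathsf h(z_1,y)$, so $x\cdot\frac4x=4$ survives and $e^{-2y^2/x}\to1$ pointwise. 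Putting these together gives
\[
x^{3/2}\bar g_\infty(x)\to\frac12\cdot\frac1{\sqrt{2\pi}}\cdot 4\int_\R\int_{\R_+^2}e^{2y}(z_1+z_2)^{-2}\mathsf h(z_1,y)\mathsf h(z_2,y)\,dy\,dz_1\,dz_2=\sqrt{\tfrac2\pi}\int e^{2y}(z_1+z_2)^{-2}\mathsf h(z_1,y)\mathsf h(z_2,y),
\]
which is exactly $\lambda$. Again a dominated-convergence argument is needed, now with the large parameter being $x$ inside $a_{x/4}$; the bound \eqref{e.bounda} with $t=\frac x4$ controls $x\,a_{x/4}(z_1;y)$ by $C e^{2y^2/x}\mathsf{(\cdots)}$, and since $e^{2y^2/x}$ multiplied by $x^{3/2}G_x(2y)=(2\pi)^{-1/2}x e^{-2y^2/x}$ just gives $(2\pi)^{-1/2}x$ — which is not integrable — I would be slightly more careful and keep a fraction of the Gaussian: write $e^{-2y^2/x}\cdot e^{2y^2/x}=1$ only after checking the $y$-integral converges via the genuinely integrable factors $e^{2y}e^{-e^{2y}/(2z_i)}$ coming from $\mathsf h(z_2,y)$ and the bound on $a_{x/4}$, which decay doubly-exponentially as $y\to+\infty$ and leave a factor $e^{2y}e^{-e^{-2y}/(2z_i)}\cdot(\text{polynomial})$ integrable as $y\to-\infty$.

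Finally I would verify $\lambda\in(0,\infty)$: finiteness follows from the same integrability just described (the triple integral of $e^{2y}(z_1+z_2)^{-2}\mathsf h(z_1,y)\mathsf h(z_2,y)$ converges because $\mathsf h(z,y)\lesssim z^{-2}e^{y}e^{-(1+e^{2y})/(2z)}$, which is integrable in $z$ near $0$ and near $\infty$ — near $\infty$ the $z^{-2}$ wins and near $0$ the essential singularity $e^{-1/(2z)}$ wins — and in $y$ by the double-exponential decay at $+\infty$ and the balance at $-\infty$; the factor $(z_1+z_2)^{-2}\le (z_1 z_2)^{-1}$ is harmless), and strict positivity is immediate since the integrand is strictly positive everywhere. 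The main obstacle is the second dominated-convergence step for the tail: one must pass $x\to\infty$ through $a_{x/4}$ while simultaneously using up the Gaussian $G_x(2y)$ to produce the $x^{1}$ that cancels the $\frac4x$, and the bound \eqref{e.bounda} returns an $e^{2y^2/x}$ that exactly cancels the Gaussian decay, so a naive majorant is not integrable — the fix is to exploit that \eqref{e.bounda} holds for all $t\ge1$ so one may apply it with a slightly smaller effective time, retaining a fixed fraction $e^{-y^2}$ of Gaussian decay uniformly in $x$ large, which does give an integrable dominating function.
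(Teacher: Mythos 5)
Your proposal follows essentially the same route as the paper: Lemma~\ref{l.yor} combined with the limit \eqref{e.limita} and the dominating bound \eqref{e.bounda}, first in $L\to\infty$ and then in $x\to\infty$, with the same change of variables producing the constants $\tfrac12$ and $\sqrt{2/\pi}$. The only correction needed is in your closing paragraph: writing $x^{3/2}G_x(2y)\,a_{x/4}(z_1;y)=(2\pi)^{-1/2}\cdot 4\cdot e^{-2y^2/x}\cdot\bigl(\tfrac{x}{4}a_{x/4}(z_1;y)\bigr)$ shows the powers of $x$ cancel exactly, so \eqref{e.bounda} leaves a majorant with no stray factor of $x$ and no Gaussian decay in $y$ --- but that majorant is already integrable by exactly the mechanism you describe earlier (double-exponential decay from the $e^{-e^{2y}/(2z_i)}$ factors as $y\to+\infty$ and the $e^{2y}$ prefactor as $y\to-\infty$), so the ``smaller effective time'' device is unnecessary.
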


\begin{proof}
By Lemma~\ref{l.yor}, we have
\[
\begin{aligned}
L\EE \rho_L(0)\rho_L(x)=\frac14\int_{\R}\int_{\R_+^2} &e^{ y}G_{x(1-x/L)}(y)(z_1+z_2)^{-2}\\
&\times a_{\frac{x}{4}}(\frac{z_1}{4}; \frac{y}{2}) \frac{L}{4}a_{\frac{L-x}{4}}(\frac{z_2}{4}; \frac{y}{2}) dydz_1dz_2,
\end{aligned}
\]
To pass to the limit of $L\to\infty$, we apply \eqref{e.limita} to derive that, 
for any fixed $x,z_2>0$ and $y\in\R$, as $L\to\infty$,   
\[
\frac{L}{4}a_{\frac{L-x}{4}}(\frac{z_2}{4};\frac{y}{2})\to 
\mathsf{h}(\frac{z_2}{4},\frac{y}{2}).
\]
To justify the interchange of limit and integration, it suffices to apply \eqref{e.bounda}, and this completes the proof of \eqref{e.exginfinity}. To prove \eqref{e.asyginfinity}, by the same discussion as before, we have 
\[
\begin{aligned}
\frac{x}{4}a_{\frac{x}{4}}(\frac{z_1}{4};\frac{y}{2})\to \mathsf{h}(\frac{z_1}{4},\frac{y}{2})
\end{aligned}
\]
as $x\to\infty$, and the interchange of limit and integration can be justified in the same way using \eqref{e.bounda}.
\end{proof}

\begin{remark}\label{r.quench}
Suppose we do not take the expectation with respect to $\xi$ and consider the density function
\[
\int_\R \rho_\infty(y)\rho_\infty(y+x)dy.
\]
For each realization of $\B$, this is proportional to
\begin{equation}\label{e.10241}
\int_{\R} e^{-\B(y)}e^{-\B(y+x)}dy.
\end{equation}
For almost every realization of the Bessel-$3$ process, we know that there exists $T(\omega)<\infty$ so that $\B(y)\geq y^{1/4}$ for all $y\geq T(\omega)$. Thus, the quenched density of $\ell(t)$, which is proportional to \eqref{e.10241}, has arbitrarily high-order moments. This implies that the heavy tail of $\bar{g}_\infty(x)=\EE\int_\R \rho_\infty(y)\rho_\infty(y+x)dy$ is a result of the rare fluctuations of $\B$.
\end{remark}

\end{document}